\begin{document}

\title{PHASE-ISOMETRIES ON THE UNIT SPHERE OF $C(K)$
\thanks{The first author is supported by the Natural Science Foundation of China (Grant Nos. 11371201, 11201337, 11201338)}
}

\author{Dongni Tan         \and
        Yueli Gao 
}


\institute{Dongni Tan \at
              Tianjin University of Technology \\
              \email{tandongni0608@sina.cn}           
           \and
           Yueli Gao \at
              Tianjin University of Technology
}

\date{Received: xxxxxx / Accepted: zzzzzz}

\maketitle

\begin{abstract}
We say that a map $T: S_X\rightarrow S_Y$ between the unit spheres of two real normed-spaces $X$ and $Y$ is a phase-isometry if it satisfies
\begin{eqnarray*}
\{\|T(x)+T(y)\|, \|T(x)-T(y)\|\}=\{\|x+y\|, \|x-y\|\}
\end{eqnarray*}
for all $x,y\in S_X$. In the present paper, we show that there is a phase function $\varepsilon:S_X\rightarrow \{-1,1\}$ such that $\varepsilon \cdot T$ is an isometry which can be extended a real linear isometry on $X$ whenever $T$ is surjective, $X=C(K)$ and $Y$ is an arbitrary Banach space. Additionally, if $T$ is a surjective phase-isometry between the unit spheres of $C(K)$ and $C(\Omega)$, where $K$ and $\Omega$ are compact Hausdorff spaces, we prove that there are a homeomorphism $\varphi: \Omega\rightarrow K$ and a continuous unimodular function $h$ on $\Omega$ such that $$T(f)\in\{h\cdot f\circ \varphi,-h\cdot f\circ \varphi\}$$ for all $f\in S_{C(K)}$. This also can be seen as a Banach-Stone type representation for phase-isometries in $C(K)$ spaces.

\keywords{Wigner's theorem \and Tingley's problem \and Banach-Stone \and phase-isometry \and phase-equivalent}
%
\subclass{Primary 46B04 \and Secondary 46B20}
\end{abstract}

\section{Introduction}
\label{intro}

Let $X$ and $Y$ be real normed spaces, and let $A$ and $B$ be subsets of $X$ and $Y$ respectively. A map $T:A\rightarrow B$ is called an {\it isometry}
if  \begin{eqnarray*}
\|T(x)-T(y)\|=\|x-y\|,   \quad (x,y\in A)
\end{eqnarray*}
and a {\it phase-isometry} if  \begin{eqnarray}\label{eq}
\{\|T(x)+T(y)\|, \|T(x)-T(y)\|\}=\{\|x+y\|, \|x-y\|\},   \quad (x,y\in A).
\end{eqnarray}
 Two maps $S, T:A\rightarrow B$ are said to be {\it phase-equivalent} if there is a phase function $\varepsilon:A\rightarrow \{-1,1\}$  such that $\varepsilon \cdot T=S$. It
should be noted that $\varepsilon$ does not need to be continuous. In the present paper, it is the connection between phase-isometries and isometries restrained on the unit spheres that will concern us. We explore the following

 \begin{question}
  Is a phase-isometry between the unit spheres of two real normed spaces $X$ and $Y$ necessarily phase-equivalent to an isometry which can be extended to a linear isometry from $X$ onto $Y$?
 \end{question}

   This question is motivated by Tingley's problem and Wigner's theorem.

 Tingley's problem asked whether every isometry between the unit spheres of real normed spaces can be extended to a linear isometry between the corresponding entire spaces. This problem raised by D. Tingley \cite{Ti} in 1987 has attracted many people's attention. Although it is still open in general case, there is a large number of papers dealing with this topic (Zentralblatt Math. shows 61 related papers published from 2002 to
2020). Please see \cite{T1} and the survey \cite{D} for classical Banach spaces, and see the survey \cite{P18} which contains a good description of non-commutative operator algebras. For some of the most recent papers one can see \cite{MO18,CS,K,CP,CP2,FJ,P2}.

 Wigner's theorem plays a fundamental role in mathematical foundations of quantum
mechanics, and there are many equivalent forms of Wigner's theorem (see the survey \cite{C} for details on Wigner's theorem and its generalisations). One of these is described as follows:  Let $H$ and $K$ be complex or real inner product spaces, and let $T: H \rightarrow K $ be a map. Then $T$ satisfies
\begin{eqnarray}\label{abs}
|\langle T(x), T(y)\rangle|=|\langle x, y \rangle|  \quad (x,y\in H),
\end{eqnarray} if and only if it is phase-equivalent to a linear or an anti-linear isometry, that is,  there exists a phase function $\varepsilon: H \rightarrow \mathbb{K}$ ($\mathbb{K}$ is $\mathbb{C}$ or $\mathbb{R}$) with $|\varepsilon(x)|=1$ such that $\varepsilon\cdot T$ is a linear isometry or a conjugate linear isometry. If $H$ and $K$ are real, it is easily checked that \eqref{abs} implies \eqref{eq}. The converse i.e., \eqref{eq} implies \eqref{abs} follows directly from polarization identity (see \cite[Theorem 2]{MP}).  The previous Wigner's theorem thus establishes that every phase-isometry between two real inner product spaces is phase-equivalent to a real linear isometry. It is natural to ask whether this result remains true when $X$ and $Y$ are real normed but not inner product spaces.

 The first author and Huang in \cite{HT} first attacked this problem for atomic $L_{p}$-spaces with $(p>0)$ and got a positive answer. More general results offering affirmative answers have been
proven in \cite[Theorem 1]{HT} whenever $T$ is surjective and $X$ is smooth, in
\cite[Theorems 9 and 11]{RD} whenever $T$ is surjective and dim $X=2$, or $T$ is surjective and $X$ is
strictly convex. In \cite[Theorem 2.4]{IT} the same result has been proven without
the assumption of surjectivity, assuming only that $Y$ is strictly convex. Very recently, this problem has been completely solved with a positive answer in \cite[Theorem 4.2]{IBT} without any assumptions on $X$ or $Y$.

In the earlier time, the study of phase-isometries which are defined only on unit spheres has started (see \cite{JH} for $l_p$-spaces ($0<p<\infty$) and \cite{TX} for $\mathcal{L}^\infty(\Gamma)$-type spaces). The present paper continues this topic and considers it in more general case. Meanwhile, we generalize the main result of \cite{TX} to $C(K)$-spaces which are the spaces of continuous functions on a compact Hausdorff space $K$ with the supremum norm. To be precise, we show that every surjective phase-isometry between the unit spheres of $C(K)$ and an arbitrary Banach space $Y$ is phase-equivalent to an isometry which can be extended to a linear isometry from $C(K)$ onto $Y$. In the particular case where the target space $Y$ is also a continuous function space, we give a Banach-Stone type representation for phase-isometries in $C(K)$-spaces.

 Let us mention that although the proof of \cite[Theorem 4.2]{IBT} for phase-isometries on the entire space is quite technical, it cannot be applied to phase-isometries that have domain only on the unit sphere. Compared to the whole space, the unit sphere does not have interior points or appropriate algebraic structure. It is interesting to give a geometric idea for individual spaces to overcome these difficulties.

\section{Preliminaries}
\label{sec:1}
In what follows, all normed spaces are real normed spaces. The letters $X$, $Y$ will denote normed spaces and $X^*, Y^*$ are their dual spaces, respectively. For a real normed space $X$, we denote by $S_X$ its unit sphere.
A subset $F$ of $S_X$ is said to be a \emph{maximal convex subset of $S_X$} if it is not properly contained in any other convex subset of $S_X$. For a compact Hausdorff space $K$, $C(K)$ is the space of continuous functions on $K$ with the max-norm.
Given $t\in K$, we will use the following notation:
\begin{equation*}
F_t:=\{f\in S_{C(K)}: f(t)=1\}.
\end{equation*}
It is easily checked that every maximal convex subset of $S_{C(K)}$ is of the form $F_t$ or $-F_t$ for some $t\in K$.
Given an $x\in S_X$, the star set of $x$ with respect to $S_X$ is defined by
\begin{center}
$\mbox{st}(x):=\{y:y\in S_X,\| y+x\|=2\}$.
\end{center}

 The results of this section valid for general normed spaces may have independent interest. We begin with elementary properties on star sets. The second conclusion of the first lemma was previously shown in \cite[Lemma 3.3]{Ta}. The proof is given for readers' convenience.
 \begin{lemma}\label{lemma:4}
  For every $x\in S_X$, if $\mbox{st}(x)$ is convex, then $\mbox{st}(x)$ is a maximal convex subset of $S_X$. Moreover, if $X$ is separable, every maximal convex subset $F$ of $S_X$ satisfies $F=\mbox{st}(x)$ for some $x\in X$.
   \end{lemma}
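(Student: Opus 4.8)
The first assertion is the soft half and should follow directly from the definition of $\mbox{st}$. The plan is to note that \emph{every} convex subset $C\subseteq S_X$ containing $x$ is automatically contained in $\mbox{st}(x)$: for any $y\in C$ the midpoint $\tfrac12(x+y)$ lies in $C\subseteq S_X$, so $\|x+y\|=2$ and hence $y\in\mbox{st}(x)$. Since $\|2x\|=2$ gives $x\in\mbox{st}(x)$, any convex set $C$ with $\mbox{st}(x)\subseteq C\subseteq S_X$ satisfies $C\subseteq\mbox{st}(x)$, i.e. $C=\mbox{st}(x)$. Thus, once $\mbox{st}(x)$ is assumed convex, it cannot be properly enlarged inside $S_X$ and so is maximal. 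I anticipate no obstacle in this half, and note that it uses no hypothesis beyond convexity of $\mbox{st}(x)$.

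For the second assertion I would first put an arbitrary maximal convex set $F$ into a normal form. Since $\|y\|=1$ on $F$, the set $F$ is disjoint from the open unit ball $U=\{u:\|u\|<1\}$, so the geometric Hahn--Banach separation theorem yields a nonzero $f\in X^{*}$ with $\sup_{U}f\le\inf_{F}f$. Because $\sup_{U}f=\|f\|$ while $f(y)\le\|f\|$ for every $y\in F\subseteq S_X$, all these quantities coincide; normalising $g:=f/\|f\|\in S_{X^{*}}$ then gives $g\equiv 1$ on $F$. As $g^{-1}(1)\cap S_X$ is convex and contains $F$, maximality forces $F=g^{-1}(1)\cap S_X$. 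Separability plays no role up to this point.

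The crux is to produce a single $x$ with $\mbox{st}(x)=F$, and this is where I expect the real difficulty. For \emph{every} $x\in F$ one already has $F\subseteq\mbox{st}(x)$, since $g(x+y)=2$ forces $\|x+y\|=2$ for $y\in F$; the trouble is the reverse inclusion, because a point of $F$ may carry many norming functionals, each cutting out a face that could reach beyond $F$, so $\mbox{st}(x)$ might spill outside $F$. To defeat this I would invoke separability: choose a countable dense set $\{x_{n}\}\subseteq F$ and put $x=\sum_{n}2^{-n}x_{n}$, which converges in the complete setting of interest (e.g. $X=C(K)$) and satisfies $g(x)=1$ and $\|x\|=1$, hence $x\in F$. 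The weighting is the decisive device: any $h\in S_{X^{*}}$ with $h(x)=1$ must satisfy $h(x_{n})=1$ for all $n$, because $\sum_{n}2^{-n}h(x_{n})=1=\sum_{n}2^{-n}$ with each $h(x_{n})\le 1$ forces equality termwise; as $h^{-1}(1)\cap S_X$ is closed and contains the dense set $\{x_{n}\}$, it contains $F$, and maximality upgrades this to $F=h^{-1}(1)\cap S_X$. Finally, if $y\in\mbox{st}(x)$ then $\|x+y\|=2$ supplies exactly such an $h$, now also with $h(y)=1$, so $y\in h^{-1}(1)\cap S_X=F$; this yields $\mbox{st}(x)\subseteq F$ and hence equality. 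The genuine obstacle is precisely the non-uniqueness of supporting functionals along $F$, and the averaging construction over a dense sequence is the mechanism that forces every supporting functional of the chosen $x$ to equal $1$ on all of $F$; separability is exactly what makes this construction available.
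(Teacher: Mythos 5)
Your proof is correct and follows essentially the same route as the paper's: the midpoint argument for maximality, and for the converse the weighted sum $x=\sum_n 2^{-n}x_n$ over a dense sequence in $F$ together with a Hahn--Banach functional norming $x+y$ to force $\mbox{st}(x)\subseteq F$. The only differences are that you supply the (standard) separation argument producing the supporting functional for $F$, which the paper takes for granted, and that you explicitly flag the need for completeness so that the series converges --- a point the paper glosses over as well.
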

\begin{proof}Given $x\in S_X$, let $C\subset S_X$ be convex with $x\in C$. Then for every $y\in C$,  $(x+y)/2\in C$ implies that
$\|x+y\|=2$. Thus $y \in \mbox{st}(x)$, and hence $C\subset \mbox{st}(x)$. This entails the maximality of $\mbox{st}(x)$. For the second conclusion, let $x^*\in S_{X^*}$ be the functional such that $$F=\{x\in S_X: x^*(x)=1\},$$ and let $\{y_n\}$ be dense in $F$. Then $x_0=\sum_{n=1}^{\infty}{2^{-n}}y_n$ is just the point such that $F=\mbox{st}(x_0)$. Indeed, it is clear that $F\subset \mbox{st}(x_0)$. To see the converse, for every $y\in \mbox{st}(x_0)$, since $\|y+x_0\|=2$, the Hahn-Banach theorem gives a functional $z^*\in S_{X^*}$ such that $z^*(y+x_0)=\|y+x_0\|=2$. This implies that $z^*(y)=z^*(x_0)=1$, and thus $z^*(y_n)=1$ for all $n$. Since $\{y_n\}$ is dense in $F$, it follows that $z^*(z)=1$ for all $z\in F$, that is $F\subset F_{z^*}:=\{z\in B_X: z^*(z)=1\}$.  This together with the maximality of $F$ yields $F_{z^*}=F$, and thus $y\in F$. So $\mbox{st}(x_0)\subset F$. The proof is complete.
\end{proof}

The following result for phase-isometries on the whole space has already been shown in \cite[Lemma 2]{TH}. The same result holds for phase-isometries that restricts on the unit spheres with a completely analogous proof. We supply the proof for readers's convenience.

\begin{lemma}\label{lemma:1}
Let $X$ and $Y$ be real normed spaces, and let $T: S_X \rightarrow S_Y$ be a surjective phase-isometry. Then it is injective and $T(-x)=-T(x)$ for all $x\in S_X$. In particular, $T^{-1}$ is also a surjective phase-isometry.
\end{lemma}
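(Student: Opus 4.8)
The plan is to extract everything from the defining identity \eqref{eq} by feeding it carefully chosen pairs, and to invoke surjectivity only at the single point where it is genuinely needed. First I would substitute $y=-x$ into \eqref{eq}: since $\{\|x+(-x)\|,\|x-(-x)\|\}=\{0,2\}$, this forces $\{\|T(x)+T(-x)\|,\|T(x)-T(-x)\|\}=\{0,2\}$, so that for each fixed $x\in S_X$ either $T(-x)=-T(x)$ or $T(-x)=T(x)$. Because $\|T(x)\|=1$ rules out $T(x)=-T(x)$, the whole statement reduces to excluding the possibility $T(-x)=T(x)$.

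The heart of the argument, and the one place surjectivity enters, is ruling out $T(-x_0)=T(x_0)$. Suppose this held for some $x_0$, and write $v:=T(x_0)=T(-x_0)$. Since $-v\in S_Y$ and $T$ is surjective, I would pick $w\in S_X$ with $T(w)=-v$. Applying \eqref{eq} to the pair $(x_0,w)$ gives $\{\|T(x_0)+T(w)\|,\|T(x_0)-T(w)\|\}=\{\|v-v\|,\|v+v\|\}=\{0,2\}$, whence $\{\|x_0+w\|,\|x_0-w\|\}=\{0,2\}$. Thus $w=x_0$ or $w=-x_0$; in either case $T(w)\in\{T(x_0),T(-x_0)\}=\{v\}$, contradicting $T(w)=-v\neq v$. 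Hence $T(-x)=-T(x)$ for every $x\in S_X$.

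With oddness in hand, injectivity is immediate: if $T(x)=T(y)$, then $\|T(x)-T(y)\|=0$ and $\|T(x)+T(y)\|=2$, so \eqref{eq} yields $\{\|x+y\|,\|x-y\|\}=\{0,2\}$; therefore $y=x$ or $y=-x$, and the latter is impossible since it would force $T(x)=T(-x)=-T(x)$. Finally, $T$ being a bijection of $S_X$ onto $S_Y$, the inverse $T^{-1}$ is well defined and surjective, and writing $u=T(x)$, $v=T(y)$ shows at once that $\{\|T^{-1}(u)+T^{-1}(v)\|,\|T^{-1}(u)-T^{-1}(v)\|\}=\{\|x+y\|,\|x-y\|\}=\{\|u+v\|,\|u-v\|\}$, so $T^{-1}$ is again a surjective phase-isometry.

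The only real obstacle is the step excluding $T(-x)=T(x)$: the defining identity by itself is too symmetric to distinguish $T(-x)=T(x)$ from $T(-x)=-T(x)$, so surjectivity is essential precisely there. Everything else is a clean consequence once that dichotomy is resolved.
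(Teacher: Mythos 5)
Your proof is correct and rests on essentially the same mechanism as the paper's: surjectivity is used exactly once to produce a preimage of $-T(x)$, and the phase-isometry identity then forces that preimage to be $\pm x$, with the $+x$ alternative excluded because it would give $T(x)=-T(x)$ on the unit sphere. The paper runs this directly (take $y$ with $T(y)=-T(x)$ and show $y=-x$) rather than first setting up the dichotomy $T(-x)\in\{T(x),-T(x)\}$ and arguing by contradiction, but the two presentations are interchangeable; your filled-in injectivity and $T^{-1}$ steps match the paper's terse ``similarly.''
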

\begin{proof}
Since $T$ is surjective, we can find a $y\in S_X$ such that $T(y)=-T(x)$. We deduce from that $T$ is phase-isometry that
\begin{equation*}
 \{\|x+y\|,\|x-y\|\}=\{\|T(x)+T(y)\|, \|T(x)-T(y)\|\}=\{0, \|T(x)-T(y)\|\}.
\end{equation*}
If $\|x-y\|=0$, then $x=y$. This thus implies that $T(x)=0$ which is impossible. So $\|x+y\|=0$, and hence $y=-x$, that is $T(-x)=-T(x)$. Similarly, making use of the fact that $T$ satisfies \eqref{eq} we see that $T$ is injective.
\end{proof}
An easy adaptation of the proof of \cite[Lemma 3.2]{TH1} yields the following result. We give the proof for the sake of completeness.
\begin{lemma}\label{lemma:3}
Let $X$ and $Y$ be normed spaces, and let $T:S_X\rightarrow S_Y$ be a surjective phase-isometry. Then for every $x\in S_X$, $\mbox{st}(x)$ is a maximal convex subset of $S_X$ if and only if $\mbox{st}(T(x))$ is a maximal convex subset of $S_Y$.
\end{lemma}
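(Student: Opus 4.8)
The plan is to reduce the entire statement to a single intrinsic property of the symmetric ``double star set'' $\mbox{st}(x)\cup\mbox{st}(-x)$ that is visibly preserved by $T$, and then to show this property is equivalent to the convexity of $\mbox{st}(x)$. First, by Lemma~\ref{lemma:4} (and the triviality that a maximal convex subset is convex) I may replace ``$\mbox{st}(x)$ is a maximal convex subset'' by ``$\mbox{st}(x)$ is convex'' everywhere, so it suffices to prove that $\mbox{st}(x)$ is convex iff $\mbox{st}(T(x))$ is convex. Writing $N(z):=\mbox{st}(z)\cup\mbox{st}(-z)=\{y\in S_X:\,2\in\{\|y+z\|,\|y-z\|\}\}$ (note $\mbox{st}(-z)=-\mbox{st}(z)$), the point is that \eqref{eq} preserves the unordered pair $\{\|p+q\|,\|p-q\|\}$: hence $2\in\{\|z+y\|,\|z-y\|\}$ iff $2\in\{\|T(z)+T(y)\|,\|T(z)-T(y)\|\}$, which gives $T(N(z))=N(T(z))$, and, since $T$ is a bijection (Lemma~\ref{lemma:1}), it carries $N(x)$ onto $N(T(x))$ preserving for all pairs the condition $2\in\{\|p+q\|,\|p-q\|\}$. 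Consequently, if I call $N(z)$ a \emph{clique} when every two of its points $p,q$ satisfy $2\in\{\|p+q\|,\|p-q\|\}$, then $N(x)$ is a clique iff $N(T(x))$ is a clique. The lemma will therefore follow once I prove the key equivalence: $\mbox{st}(x)$ is convex iff $N(x)$ is a clique.

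The forward implication is routine. If $\mbox{st}(x)$ is convex, then for two points of $N(x)$ I distinguish three cases: two in $\mbox{st}(x)$, where the midpoint stays on $S_X$ and so $\|p+q\|=2$; two in $-\mbox{st}(x)$, handled by passing to $-p,-q$; and one in each, where $p,-q\in\mbox{st}(x)$ gives $\|p-q\|=2$. In every case $2\in\{\|p+q\|,\|p-q\|\}$, so $N(x)$ is a clique. The substantive direction is the converse, and this is where I expect the main obstacle, because a phase-isometry is genuinely sign-blind on individual pairs and cannot be used to transport convexity of $\mbox{st}(x)$ directly. Assuming the clique condition, I would first show $\|u+v\|=2$ for all $u,v\in\mbox{st}(x)$. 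If some pair had $\|u+v\|<2$, I would \emph{push both points toward $x$}: choosing $\psi_1\in S_{X^*}$ with $\psi_1(u)=\psi_1(x)=1$ and similarly $\psi_2$ for $v$, the points $u'=(x+u)/2$ and $v'=(x+v)/2$ still lie in $\mbox{st}(x)\subseteq N(x)$ (as $\psi_1(u')=1=\psi_1(x)$ forces $u'\in\mbox{st}(x)$, and likewise $v'$), yet $\|u'-v'\|=\tfrac12\|u-v\|\le 1$ and $\|u'+v'\|=\|x+\tfrac12(u+v)\|\le 1+\tfrac12\|u+v\|<2$, so $2\notin\{\|u'+v'\|,\|u'-v'\|\}$, contradicting the clique condition. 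This contraction toward $x$ is the crux of the argument.

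Finally I would upgrade ``$\|u+v\|=2$ for all $u,v\in\mbox{st}(x)$'' to convexity. Let $M_x$ be a maximal convex subset of $S_X$ containing $x$ (Zorn's lemma). By the argument in Lemma~\ref{lemma:4}, any convex subset of $S_X$ containing $x$ lies in $\mbox{st}(x)$, so $M_x\subseteq\mbox{st}(x)$. If this were strict, pick $y\in\mbox{st}(x)\setminus M_x$; then $\mbox{conv}(M_x\cup\{y\})$ would be a convex subset of $S_X$ properly containing $M_x$, which is impossible unless some point $\lambda y+(1-\lambda)z$ with $z\in M_x$ leaves $S_X$, i.e. $\|y+z\|<2$ for some $z\in M_x\subseteq\mbox{st}(x)$ --- contradicting $\|y+z\|=2$. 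Hence $\mbox{st}(x)=M_x$ is convex. Combining all the steps yields $\mbox{st}(x)$ convex $\iff$ $N(x)$ clique $\iff$ $N(T(x))$ clique $\iff$ $\mbox{st}(T(x))$ convex, and, via Lemma~\ref{lemma:4}, the same chain of equivalences for ``maximal convex subset'', which is exactly the assertion.
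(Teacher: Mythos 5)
Your proposal is correct and rests on the same two ingredients as the paper's own proof: the midpoint contraction toward the center (the paper forms $z_i=(y_i+y)/2$ in the target sphere and pulls back through $T^{-1}$, you form $u'=(x+u)/2$, $v'=(x+v)/2$ in the domain) so that both $\|\cdot+\cdot\|$ and $\|\cdot-\cdot\|$ fall strictly below $2$ and the sign ambiguity of the phase-isometry is defeated, together with the observation that convexity of $\mbox{st}(x)$ is equivalent to the pairwise condition $2\in\{\|p+q\|,\|p-q\|\}$ on $\mbox{st}(x)\cup\mbox{st}(-x)$. The only organizational difference is that you package this as an explicit invariant proved as a two-sided equivalence (including the Zorn's-lemma upgrade from ``all pairwise sums equal $2$'' to convexity, a step the paper leaves implicit when it passes from ``$\mbox{st}(y)$ not convex'' to ``some pair with $\|y_1+y_2\|<2$''), whereas the paper argues a single direction by contradiction.
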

\begin{proof}
 Given $x\in S_X$, set $y:=T(x)$.  By Lemma \ref{lemma:1}, we see that $T$'s inverse $T^{-1}$ is also a surjective phase-isometry.
 To see our conclusion, it suffices to prove that if $\mbox{st}(x)$ is a maximal convex subset of $S_X$, then so is $\mbox{st}(y)$. Suppose that it is not true. By Lemma \ref{lemma:4}, we know that $\mbox{st}(y)$ is not convex. This hence allows
  the existence of two distinct elements $y_1, y_2\in \mbox{st}(y)$ such that $\|y_1+y_2\|<2$. We set $z_1:=\frac{1}{2}(y_1+y)$ and $z_2:=\frac{1}{2}(y_2+y)$. It is easy to see that $z_1, z_2\in \mbox{st}(y)$ and that they satisfy
\begin{equation*}
 \|z_1+z_2\|<2 \ \mbox{and} \ \|z_2-z_1\|<2.
\end{equation*}
Since $T$ is surjective, there are $x_1,x_2\in S_X$ such that $T(x_1)=z_1$ and $T(x_2)=z_2$. That $T$ is a phase-isometry implies that
\begin{equation*}
x_1, x_2\in \mbox{st}(x) \cup \mbox{st}(-x).
\end{equation*}
Thus $\|x_1+x_2\|=2$ or $\|x_1-x_2\|=2$, which leads to a contradiction that $\|z_1+z_2\|=2 $ or $\|z_1-z_2\|=2$. The proof is complete.
\end{proof}

We present here a version of \cite[Lemma 3.4]{Ta} for our use and convenience. For every subset $A$ of a Banach space, $[A]$ stands for its closed linear span.

\begin{lemma}\label{lemma:9}
Let $X$ and $Y$ be two normed spaces, and let $T: S_X \rightarrow S_Y$ be a surjective phase-isometry. Then for every separable subset $A\subset X$,  there are separable subspaces $X_0\subset X$ and $Y_0\subset Y$ such that $A\subset X_0$ and $T(S_{X_0})=S_{Y_0}$.
\end{lemma}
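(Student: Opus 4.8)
The plan is to run a back-and-forth enveloping-subspace construction, building two increasing chains of separable subspaces in $X$ and $Y$ and then taking closed unions. The one genuine difficulty is that a phase-isometry need not be norm-continuous, so I would first isolate the weaker regularity $T$ does enjoy. Writing $d(x,y):=\min\{\|x-y\|,\|x+y\|\}$ for the phase-metric, the defining identity \eqref{eq} gives $\{\|T(x)+T(y)\|,\|T(x)-T(y)\|\}=\{\|x+y\|,\|x-y\|\}$, and equal sets have equal minima, so $d(T(x),T(y))=d(x,y)$ for all $x,y\in S_X$; that is, $T$ is a genuine isometry for $d$. Consequently, whenever $x_k\to x$ in norm we have $d(x_k,x)\le\|x_k-x\|\to0$, hence $\min\{\|T(x_k)-T(x)\|,\|T(x_k)+T(x)\|\}\to0$; passing to a subsequence along which the active sign is constant yields $T(x_k)\to T(x)$ or $T(x_k)\to -T(x)$. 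This \emph{continuity up to sign} is what will replace ordinary continuity throughout. By Lemma \ref{lemma:1}, $T^{-1}$ is also a surjective phase-isometry, so it enjoys the same property.

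Next I would record the separability transfer. If $D\subset S_X$ is norm-separable, then $(D,d)$ is $d$-separable (the identity from the norm metric to $d$ is $1$-Lipschitz), so its isometric image $(T(D),d)$ is $d$-separable; and a countable $d$-dense set $\{w_j\}$ in $T(D)$ yields the countable norm-dense set $\{w_j\}\cup\{-w_j\}$, since $d(w,w_j)<\varepsilon$ forces $w$ to lie within $\varepsilon$ of $w_j$ or of $-w_j$. Thus $T$, and likewise $T^{-1}$, carries norm-separable subsets of the sphere to norm-separable sets. Now set $X_1:=\overline{\operatorname{span}}\,A$, and for $n\ge1$ define $Y_n:=\overline{\operatorname{span}}\big(\bigcup_{k=1}^{n}T(S_{X_k})\big)$ and $X_{n+1}:=\overline{\operatorname{span}}\big(X_n\cup T^{-1}(S_{Y_n})\big)$. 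By the transfer step every $T(S_{X_k})$ and every $T^{-1}(S_{Y_n})$ is norm-separable, so each $X_n,Y_n$ is separable and the chains increase. Put $X_0:=\overline{\bigcup_{n}X_n}$ and $Y_0:=\overline{\bigcup_{n}Y_n}$; both are separable and $A\subset X_1\subset X_0$.

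It remains to verify $T(S_{X_0})=S_{Y_0}$, and this is where continuity up to sign does the work. Normalizing approximants (any unit vector in $X_0$ is a norm-limit of elements of $\bigcup_n X_n$, which may be rescaled to unit vectors once they are close to the limit) shows $S_{X_0}=\overline{\bigcup_n S_{X_n}}$, and symmetrically for $Y$. Given $x\in S_{X_0}$, choose $u_k\in S_{X_{n_k}}$ with $u_k\to x$; then $T(u_k)\in T(S_{X_{n_k}})\subset Y_{n_k}\subset Y_0$, and a subsequence satisfies $T(u_k)\to\pm T(x)$, so $\pm T(x)\in Y_0$ and hence $T(x)\in Y_0$ because $Y_0$ is a subspace. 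This gives $T(S_{X_0})\subseteq S_{Y_0}$. The reverse inclusion is the mirror image: for $y\in S_{Y_0}$ pick $v_k\in S_{Y_{n_k}}$ with $v_k\to y$, note $T^{-1}(v_k)\in T^{-1}(S_{Y_{n_k}})\subset X_{n_k+1}\subset X_0$, and apply continuity up to sign of $T^{-1}$ to conclude $T^{-1}(y)\in X_0$, whence $y=T(T^{-1}(y))\in T(S_{X_0})$. The main obstacle is precisely the possible discontinuity of $T$; everything else is the routine bookkeeping of the enveloping-subspace argument, and the phase-metric isometry property is exactly what tames that obstacle.
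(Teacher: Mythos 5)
Your proof is correct and follows essentially the same back-and-forth enveloping construction as the paper, including the identical key separability-transfer step (a countable norm-dense set $\{x_n\}$ maps under the phase-isometry to a set $\{T(x_n)\}\cup\{-T(x_n)\}$ that is norm-dense in the image). The only cosmetic difference is that you take closed unions $X_0=\overline{\bigcup_n X_n}$ and therefore need the ``continuity up to sign'' argument at the end, whereas the paper takes the plain increasing union $\bigcup_n X_n$, so that every unit vector of $X_0$ already lies in some $X_n$ and no limiting argument is required.
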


\begin{proof}
Let $X_1=[A]$ and $Y_1=[T(S_{X_1})]$. Then $Y_1$ is separable. In fact, to see this it is enough to show that $T(S_{X_1})$ is separable. Let $\{x_n\}\subset S_{X_1}$ be a dense subset. We will prove that $\{T(x_n)\}\cup\{-T(x_n)\}$ is also dense in $T(S_{X_1})$.

 For every $y\in T(S_{X_1})$, there is a subsequence $\{x_{n_k}\}$ of $\{x_n\}$ such that $x_{n_k}$ converges to $T^{-1}(y)$ in norm as $k$ goes to $\infty$. Now for every $k\geq 1$, choose $y_k\in S_Y$ such that
 $$y_k= \left
\{ \begin{array}{ll}
T(x_{n_k}) & \mbox{if }\,\|T(x_{n_k})-y\|=\|x_{n_k}-T^{-1}(y)\|; \\
-T(x_{n_k}) & \mbox{if } \, \|T(x_{n_k})-y\|=\|x_{n_k}+T^{-1}(y)\|\neq\|x_{n_k}-T^{-1}(y)\|.
\end{array}
\right.$$
Then it is clear that  $$\|y_k-y\|=\|x_{n_k}-T^{-1}(x)\|.$$ This hence shows that $\{T(x_n)\}\cup\{-T(x_n)\}$ is dense in $T(S_{X_1})$.
 We can define inductively separable subspaces $X_n\subset X$ and $Y_n\subset Y$ such that for all $n\geq 2$,
\begin{equation*}
  X_n=[T^{-1}(S_{Y_{n-1})}] \quad \mbox{and} \quad  Y_n=[T(S_{X_{n}})].
\end{equation*}
Then it is easy to verify that $X_0=\bigcup_{n=1}^{\infty} X_n$ and $Y_0=\bigcup_{n=1}^{\infty} Y_n$ are just the subspaces that we need.
\end{proof}

\section{Phase-isometries from $S_{C(K)}$ onto $S_{C(\Omega)}$}
\label{sec:2}

Although there is a generalized version of the following proposition in next section, we give it here for better understanding phase-isometries between the unit spheres of continuous function spaces.

\begin{proposition}\label{prop:6}
Let $K$ and $\Omega$ be compact Hausdorff spaces, and let $T:S_{C(K)}\rightarrow S_{C(\Omega)}$ be a surjective phase-isometry. Then there is a homeomorphism $\sigma:K\rightarrow \Omega$ such that
 $T(F_t\cup -F_t)=F_{\sigma(t)}\cup-F_{\sigma(t)}$ for every $t\in K$.
\end{proposition}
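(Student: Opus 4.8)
The plan is to show that $T$ carries each antipodal pair of maximal convex subsets onto another such pair, and then to read off $\sigma$ from this correspondence. Since the maximal convex subsets of $S_{C(K)}$ are exactly the sets $F_t$ and $-F_t$, it is natural to work with the union $P_t:=F_t\cup(-F_t)=\{f\in S_{C(K)}:|f(t)|=1\}$; for $f\in S_{C(K)}$ write $Z_f:=\{s\in K:|f(s)|=1\}$ for its nonempty closed peak set. A first observation, via Urysohn's lemma, is that $\bigcap_{f\in P_t}Z_f=\{t\}$: given $t'\neq t$ one produces $f$ with $f(t)=1$ and $f(t')=0$, so $t'$ lies outside some $Z_f$. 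Hence $t\mapsto P_t$ is a bijection between $K$ and the family of pairs (with $P_t\subseteq P_{t'}$ forcing $t'=t$), and the same holds in $C(\Omega)$. It therefore suffices to prove that for each $t$ there is an $s\in\Omega$ with $T(P_t)=P_s$, and then to set $\sigma(t):=s$.

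The heart of the matter is the inclusion $T(P_t)\subseteq P_s$ for a single $s$, i.e. that $\{Tf:f\in F_t\}$ admits a common peak point. Since $F_t$ is convex, any $f,g\in F_t$ satisfy $\|f+g\|=2$; the phase-isometry identity then gives $\max\{\|Tf+Tg\|,\|Tf-Tg\|\}=2$, that is $Z_{Tf}\cap Z_{Tg}\neq\emptyset$. Thus $\{Z_{Tf}:f\in F_t\}$ is a family of closed subsets of the compact space $\Omega$ with pairwise nonempty intersections, and the goal is to upgrade this to a nonempty total intersection. Where a strong peak function exists, that is, when some $f_0\in F_t$ has $\mbox{st}(f_0)=F_t$ (equivalently $Z_{f_0}=\{t\}$ with $f_0$ not attaining $-1$), this is immediate: by Lemma \ref{lemma:3}, $\mbox{st}(Tf_0)$ is a maximal convex subset $G$ equal to $F_s$ or $-F_s$, and the computation above shows $Tf\in G\cup(-G)=P_s$ for every $f\in F_t$. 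For a general $t$ I would localise: given finitely many $f_1,\dots,f_n\in F_t$, use Lemma \ref{lemma:9} to pass to separable subspaces $X_0\ni f_1,\dots,f_n$ with $T(S_{X_0})=S_{Y_0}$, apply Lemma \ref{lemma:4} inside $X_0$ to realise the maximal convex subset of $S_{X_0}$ through the $f_i$ as a star set, and transport it by Lemma \ref{lemma:3} to control $\bigcap_i Z_{Tf_i}$; the finite intersection property in the compact space $\Omega$ then yields $\bigcap_{f\in F_t}Z_{Tf}\neq\emptyset$. By the uniqueness remark this intersection is a single point $s$, and $T(F_t)\subseteq P_s$, whence $T(P_t)\subseteq P_s$ since $T(-f)=-Tf$ by Lemma \ref{lemma:1}.

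For the reverse inclusion I apply the same argument to the surjective phase-isometry $T^{-1}$ (Lemma \ref{lemma:1}): it sends $P_s$ into some $P_{t'}$, so $P_t\subseteq T^{-1}(P_s)\subseteq P_{t'}$, and $P_t\subseteq P_{t'}$ forces $t'=t$; hence $T^{-1}(P_s)=P_t$, i.e. $T(P_t)=P_s$. Setting $\sigma(t):=s$ then gives a well-defined map with $T(P_t)=P_{\sigma(t)}$, and running the construction for $T^{-1}$ produces its two-sided inverse, so $\sigma:K\to\Omega$ is a bijection.

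Finally, to see that $\sigma$ is a homeomorphism I use the characterisation $\{\sigma(t)\}=\bigcap_{f\in F_t}Z_{Tf}$ together with the preservation of the norm data: if $t_\alpha\to t$ in $K$, then, passing to a subnet with $\sigma(t_\alpha)\to s'$ in the compact space $\Omega$, a routine net/test-function argument forces $s'$ into every $Z_{Tf}$ with $f\in F_t$, so $s'=\sigma(t)$ and $\sigma$ is continuous; being a continuous bijection between compact Hausdorff spaces, it is a homeomorphism. The main obstacle is the common-peak step of the second paragraph: pairwise intersection of the peak sets $Z_{Tf}$ is automatic from the purely relational ("$=2$") content of the phase identity, but a genuine common peak does not follow from pairwise data alone, since one can build pairwise-intersecting closed sets with empty total intersection. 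It is exactly here that the finer metric content, the convexity of $F_t$, and the separable reduction of Lemmas \ref{lemma:3}, \ref{lemma:4} and \ref{lemma:9} must be brought to bear.
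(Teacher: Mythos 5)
Your proposal follows essentially the same route as the paper: the common-peak step is handled exactly as in the paper's proof (separable reduction via Lemma \ref{lemma:9}, realising the maximal convex set through $f_1,\dots,f_n$ as a star set via Lemma \ref{lemma:4}, transporting it by Lemma \ref{lemma:3}, then the finite intersection property in the compact space $\Omega$), and the reverse inclusion and bijectivity via $T^{-1}$ are likewise the paper's argument; you also correctly identify the pairwise-versus-total intersection issue as the crux. The one step I would flag is the continuity of $\sigma$: your claim that a subnet limit $s'$ of $\sigma(t_\alpha)$ is "forced into every $Z_{Tf}$ with $f\in F_t$" does not follow from what is available at this stage, since for $f\in F_t$ one generally has $f\notin F_{t_\alpha}$, and the set-theoretic correspondence $T(P_t)=P_{\sigma(t)}$ only yields the dichotomy $|Tf(\sigma(t_\alpha))|=1$ versus $|Tf(\sigma(t_\alpha))|<1$, not the quantitative relation $|Tf(\sigma(t_\alpha))|=|f(t_\alpha)|\to 1$ (that relation is only established later, in Theorem \ref{lemma:5}). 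The fix is immediate and is what the paper does: from $f\in P_t\Leftrightarrow Tf\in P_{\sigma(t)}$ one gets $\sigma^{-1}\bigl(\{s:|g(s)|<1\}\bigr)=\{t:|T^{-1}g(t)|<1\}$ for every $g\in S_{C(\Omega)}$, and since the sets $\{s:|g(s)|<1\}$ form a basis for the topology of $\Omega$ (by Urysohn), $\sigma$ is continuous; symmetrically so is $\sigma^{-1}$.
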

\begin{proof}
  For every $t\in K$, let $f_1, f_2\cdots,f_n\in F_t$ and set $$A=\mbox{co}\{f_1,f_2,\cdots,f_n\}.$$ By Lemma \ref{lemma:9}, there are separable subspaces $X_0\subset C(K)$ and $Y_0\subset C(\Omega)$ such that $A\subset X_0$ and $T(S_{X_0})=S_{Y_0}$. Since $A\subset S_{X_0}$ is convex, there is a maximal convex subset $F_0$ of $S_{X_0}$ such that $A\subset F_0$. The separability of $X_0$ and Lemma \ref{lemma:4} guarantee that $F_0=\mbox{st}(x_0)$ for some $x_0\in F_0$. By Lemma \ref{lemma:3}, $\mbox{st}(T(x_0))$ is also a maximal convex subset of $S_{Y_0}$. On the other hand, Lemma \ref{lemma:1} together with the fact that $T$ is a surjective phase-isometry establishes that
\begin{equation*}
  T\big(\mbox{st}(x_0)\cup \mbox{st}(-x_0)\big)=\mbox{st}\big(T(x_0)\big)\cup -\mbox{st}\big(T(x_0)\big).
\end{equation*}
This leads immediately to  $$\{T(f_1), T(f_2),\cdots, T(f_n)\}\subset \mbox{st}\big(T(x_0)\big)\cup -\mbox{st}\big(T(x_0)\big).$$
 It follows that there is a $t_0\in\Omega$ such that $|T(f_i)(t_0)|=1$ for all $i=1,2,\cdots,n$. Hence the closed sets $$\Omega_f:=\{s\in\Omega: |T(f)(s)|=1\}$$ for those $f\in F_t$ have the finite intersection property. Since $\Omega$ is compact, there is at least one $s\in \Omega$ such that $|T(f)(s)|=1$ for all $f\in F_t$.  This  proves that $$T(F_t)\subset F_s\cup -F_s.$$ By Lemma \ref{lemma:1}, we have $T(-F_t)\subset F_s\cup -F_s$. Since $T^{-1}: C(\Omega)\rightarrow C(K)$ is also a surjective phase-isometry, we see actually
 \begin{equation}\label{equ:3}
  T(F_t)\cup T(-F_t)=F_s\cup -F_s.
 \end{equation}
Thus the previous argument produces a map $\sigma: K\rightarrow \Omega$ satisfying $\sigma(t)=s$, where $s\in\Omega$ such that \eqref{equ:3} holds for every $t\in K$. It is easily checked from \eqref{equ:3} and the surjectivity of $T$ that $\sigma$ is a bijection. Furthermore, for every $f\in C(K)$, $\sigma$ maps the set $K_f':=\{t\in K: |f(t)|<1\}$ onto the set $\Omega_{T(f)}':=\{s\in \Omega: |T(f)(s)|<1\}$. Since $\{K_f'\}_{f\in S_{C(K)}}$ and $\{\Omega_{T(f)}'\}_{f\in S_{C(K)}}$  form a basis for the topologies of $K$ and $\Omega$ respectively, it follows that $\sigma$ is a homeomorphism. This finishes the proof.
\end{proof}

 Observe that the extreme points of $B_{C(K)}$ are those continuous functions $h$ with $|h(t)|=1$ for all $t\in K$. It should be noted that some $h\in B_{C(K)}$ is an extreme point if and only if its star is maximal. A straightforward application of Proposition \ref{prop:6} is that every phase-isometry from $S_{C(K)}$ onto $S_{C(\Omega)}$ carries extreme points to extreme points.

\begin{lemma}\label{lemma:2}
Let $K$ and $\Omega$ be compact Hausdorff spaces, and let $T:S_{C(K)}\rightarrow S_{C(\Omega)}$ be a surjective phase-isometry. Then $T(e)$ is an extreme point of $B_{C(\Omega)}$ if and only if $e$ is an extreme point of $B_{C(K)}$.
\end{lemma}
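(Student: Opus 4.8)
The plan is to deduce everything from Proposition~\ref{prop:6}, which already moves the sets $F_t\cup(-F_t)$ across $T$, so that no new geometry is required. The first step is to rewrite the recorded description of the extreme points in the language of the maximal convex sets. Since $e\in S_{C(K)}$ is extreme in $B_{C(K)}$ exactly when $|e(t)|=1$ for every $t\in K$, and since $e(t)=1$ means $e\in F_t$ while $e(t)=-1$ means $e\in-F_t$, I would record the equivalence that $e$ is extreme if and only if $e\in F_t\cup(-F_t)$ for every $t\in K$; a non-extreme $e$ fails this at some single point $t_0$ where $|e(t_0)|<1$.

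The second step transports this membership condition along $T$. By Lemma~\ref{lemma:1} the map $T$ is injective, so for every subset $A\subset S_{C(K)}$ we have $e\in A$ if and only if $T(e)\in T(A)$. Taking $A=F_t\cup(-F_t)$ and feeding in Proposition~\ref{prop:6}, namely $T\big(F_t\cup(-F_t)\big)=F_{\sigma(t)}\cup(-F_{\sigma(t)})$ for the homeomorphism $\sigma:K\rightarrow\Omega$, gives
\[
e\in F_t\cup(-F_t)\iff T(e)\in F_{\sigma(t)}\cup(-F_{\sigma(t)}),\qquad t\in K.
\]
Letting $t$ range over $K$, the left-hand condition holds for all $t$ precisely when $e$ is extreme, and because $\sigma$ is a bijection of $K$ onto $\Omega$ the right-hand condition holds for all $t\in K$ precisely when $T(e)\in F_s\cup(-F_s)$ for all $s\in\Omega$, i.e.\ precisely when $T(e)$ is extreme in $B_{C(\Omega)}$. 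Chaining these equivalences settles both directions simultaneously.

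I do not expect a genuine obstacle, since the essential work is hidden inside Proposition~\ref{prop:6}; the only points that need a little care are the injectivity of $T$, used to pass membership through $T$ in both directions, and the surjectivity of $\sigma$, used to turn the quantifier over $t\in K$ into a quantifier over $s\in\Omega$. One could also shortcut the argument through the star-set picture flagged before the statement, applying Lemma~\ref{lemma:3} to $x=e$, but the route above keeps the reasoning self-contained.
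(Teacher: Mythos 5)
Your argument is correct and is exactly the route the paper intends: the paper gives no explicit proof, stating only that the lemma is ``a straightforward application of Proposition~\ref{prop:6},'' and you have supplied precisely that application, characterizing extreme points as the functions lying in $F_t\cup(-F_t)$ for every $t$ and transporting this condition through $T$ via the injectivity from Lemma~\ref{lemma:1} and the bijectivity of $\sigma$. No gaps.
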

For a compact Hausdorff space $K$, we will use the notations
\begin{equation*}
  S_{C(K)^{+}}:=\{f\in S_{C(K)}: f(t)\geq 0 \,\,\mbox{for\, all}\,\, t\in K\}
\end{equation*}
and \begin{equation*}
  S_{C(K)^{-}}:=\{f\in S_{C(K)}: f(t)\leq 0 \,\,\mbox{for\, all}\,\, t\in K\}.
\end{equation*}
A result below characterizes the image of those positive functions of norm one under surjective phase-isometries between the unit spheres of two continuous function spaces.

For all $a,b\in\mathbb{R}$, we set $a\vee b=\max\{a,b\}$, and $a\wedge b=\min \{a,b\}$. The following result provides a Banach-Stone type representation for phase-isometries between the unit spheres of continuous function spaces.
\begin{theorem}\label{lemma:5}
 Let $K$ and $\Omega$ be compact Hausdorff spaces, and let $T:S_{C(K)}\rightarrow S_{C(\Omega)}$ be a surjective phase-isometry. Suppose that $\sigma$ is as in Proposition \ref{prop:6}. Then for every $t\in K$, we have
$$T(f)(\sigma(t))\in \{f(t), -f(t)\}$$ for all $f\in S_{C(K)}$. Moreover, there are  a homeomorphism $\varphi$ from $\Omega$ onto $K$ and a continuous unimodular function $h$ on $\Omega$ such that  for every $f\in S_{C(K)}$, there is a $\theta\in\{-1,1\}$ such that
\begin{equation}\label{equ:11}
T(f)(s)=\theta h(s) f(\varphi(s))  \quad (s\in\Omega).
\end{equation}

\end{theorem}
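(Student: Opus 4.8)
The plan is to first establish the pointwise modulus identity $|T(f)(\sigma(t))|=|f(t)|$, and then upgrade it to the signed form \eqref{equ:11} by pinning down the sign globally.

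\emph{Step 1 (the modulus identity).} Fix $t\in K$, write $s=\sigma(t)$, $a=f(t)$ and $b=T(f)(s)$; the goal is $|b|=|a|$. If $|a|=1$ this is immediate from Proposition~\ref{prop:6}, since then $f\in F_t\cup-F_t$ and so $T(f)\in F_s\cup-F_s$. For $|a|<1$ I would test $f$ against sharp positive spikes at $t$: by Urysohn's lemma choose $g_\varepsilon\in F_t$ with $0\le g_\varepsilon\le1$ supported in a neighbourhood $U$ of $t$ on which $|f-a|<\varepsilon$, taking $U$ small enough to miss a point where $|f|=1$. A direct estimate gives $\{\|f+g_\varepsilon\|,\|f-g_\varepsilon\|\}\to\{1+|a|,1\}$ as $\varepsilon\to0$. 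On the other side $g_\varepsilon\in F_t$ forces $T(g_\varepsilon)\in F_s\cup-F_s$ by Proposition~\ref{prop:6}, so $|T(g_\varepsilon)(s)|=1$ and hence $\max\{\|T(f)+T(g_\varepsilon)\|,\|T(f)-T(g_\varepsilon)\|\}\ge 1+|b|$. Since $T$ is a phase-isometry the two pairs of norms coincide, and letting $\varepsilon\to0$ yields $|b|\le|a|$. Applying the same reasoning to the surjective phase-isometry $T^{-1}$ (Lemma~\ref{lemma:1}), whose associated homeomorphism is $\sigma^{-1}$, gives the reverse inequality, so $|b|=|a|$. This is the first assertion, and with $\varphi:=\sigma^{-1}$ it reads $|T(f)(s)|=|f(\varphi(s))|$ for all $s\in\Omega$.

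\emph{Step 2 (normalisation and positive functions).} Set $h:=T(\mathbf 1)$, where $\mathbf 1$ is the constant function $1$. As $\mathbf 1$ is an extreme point of $B_{C(K)}$, Lemma~\ref{lemma:2} makes $h$ continuous and unimodular. Multiplication by $h$ is a surjective linear isometry of $C(\Omega)$, so $h\cdot T$ is again a surjective phase-isometry and fixes $\mathbf 1$; replacing $T$ by $h\cdot T$ I may assume $T(\mathbf 1)=\mathbf 1$ and must then produce, for each $f$, a single sign $\theta_f$ with $T(f)=\theta_f\,(f\circ\varphi)$. For $f\in S_{C(K)^+}$ the phase relation with $\mathbf 1$ reads $\{1+\max f,\,1-\min f\}=\{1+\max T(f),\,1-\min T(f)\}$; since $\max f=1$ and $\min f\ge0$, matching the two sets forces either $\min T(f)=\min f\ge0$ or $\max T(f)=-\min f\le0$, that is $T(f)\ge0$ or $T(f)\le0$ throughout $\Omega$. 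Together with $|T(f)|=f\circ\varphi$ this yields $T(f)=\pm(f\circ\varphi)$ with a global sign, proving the claim for positive $f$.

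\emph{Step 3 (general $f$, the main obstacle).} Writing $F:=f\circ\varphi$ and $u:=T(f)$, Step~1 gives $|u|=|F|$, so $u=\pm F$ with the sign locally constant on the open set $\{F\neq0\}$; the difficulty is to forbid a genuine change of sign between two connected pieces. The tool is Step~2: for every $g\in S_{C(K)^+}$ one has $T(g)=\theta_g(g\circ\varphi)$, so the phase relation between $f$ and $g$ transports (via $\varphi$, which preserves sup-norms) to
\[
\{\|F+G\|,\,\|F-G\|\}=\{\|u+G\|,\,\|u-G\|\}\qquad\text{for all }G\in S_{C(\Omega)^+}.
\]
Suppose, for contradiction, that $u=F$ on a nonempty clopen piece $A$ and $u=-F$ on another nonempty piece $B$ of $\{F\neq0\}$. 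Choosing $s_1\in A$, $s_2\in B$ with $0<|F(s_i)|<1$ and taking for $G$ a positive two-bump function with $G(s_1)=G(s_2)=1$ and tiny disjoint supports, each of the four norms becomes a maximum over $\{|c_1\pm1|,|c_2\pm1|,1\}$, where $c_i=F(s_i)$; because passing from $F$ to $u$ interchanges the roles of $+$ and $-$ only at $s_2$, a short case analysis on the signs of $c_1,c_2$ shows that one of the two sets equals $\{1+|c_1|,1+|c_2|\}$ and the other equals $\{1+\max(|c_1|,|c_2|),1\}$, and these cannot coincide once $c_1,c_2\neq0$. This contradiction forces $B=\varnothing$, so $u=F$ or $u=-F$ on all of $\Omega$; undoing the normalisation gives $T(f)=\theta_f\,h\,(f\circ\varphi)$, which is \eqref{equ:11}. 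I expect the real work to sit in this last step: both the construction of the separating test function and the verification that it detects the sign change uniformly over the possible signs of $F$ at the two chosen points, together with the degenerate points where $|F|=1$, which must be excluded by continuity or dealt with through the extreme-point structure of Proposition~\ref{prop:6}.
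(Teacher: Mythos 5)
Your proposal is correct, and Steps 1--2 essentially coincide with the paper's argument: the modulus identity is obtained exactly as in the paper by approximating $f$ near $t$ by elements of $F_t$ (the paper's $\psi=f+\phi(1-f(t))$ plays the role of your spike $g_\varepsilon$) and invoking $T^{-1}$ for the reverse inequality, and the treatment of $S_{C(K)^+}$ via the phase relation with $\mathds{1}_K$ is the same computation, merely preceded by your (legitimate) normalisation $T(\mathds{1}_K)=\mathds{1}_K$. Step 3 is where you genuinely diverge. The paper first proves an intermediate sign-consistency statement at the points $s$ with $|f(\varphi(s))|=1$ (using an auxiliary $f_1$ and $\mathds{1}_K$), and then tests $f_0$ against a single Urysohn function $g$ equal to $+1$ on $\{f_0\ge -a/3\}$ and $-1$ on $\{f_0\le -a/2\}$; the sign of $T(g)$ at the two bad points is controlled by that intermediate step, and the estimate $1+a/2\ge\|f_0\pm g\|\wedge\|f_0\mp g\|\ge 1+a$ gives the contradiction. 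Your localized two-bump positive test function $G$ needs only the positive-function case and so bypasses the intermediate step entirely, which is a real simplification of the logical structure. Moreover, the worry you flag at the end is unfounded: the degenerate points with $|F(s_i)|=1$ need not be excluded, since the same four-norm computation goes through for any $c_1,c_2\in[-1,1]\setminus\{0\}$ --- in every sign configuration exactly one of the two sets contains an element $\le 1+\delta$ while both elements of the other are $\ge 1+|c_1|\wedge|c_2|-\delta$, so choosing the bump supports with oscillation $\delta<\tfrac12(|c_1|\wedge|c_2|)$ forces the contradiction. The only thing missing from your write-up is actually carrying out that short case analysis; once written, the proof is complete.
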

\begin{proof}
 For the first conclusion, fix $t\in K$, and let $f\in S_{C(K)}$. If $f\in F_t\cup -F_t$, the desired conclusion follows immediately from Proposition \ref{prop:6}. Now for every $f\notin F_t\cup -F_t$, replacing $f$ by $-f$ if necessary, we may assume that $f(t)\geq 0$. For every $0<\varepsilon<1$, there is a continuous function $\phi:K\rightarrow [0,1]$ such that
  $\phi(s)=1$ if $|f(s)-f(t)|\leq \varepsilon/2$
 and $\phi(s)=0$ if $|f(s)-f(t)|>\varepsilon$. Set
 \begin{equation*}
 \psi(s)=f(s)+\phi(s)(1-f(t)).
\end{equation*}
It follows that $\psi\in F_t$ and $$\|f-\psi\|\leq1-f(t)+\varepsilon.$$

Since $T$ is a phase-isometry, we deduce from this and Proposition \ref{prop:6} that
 \begin{align*}
 1-f(t)+\varepsilon\geq\|f-\psi\|\wedge\|f+\psi\|&=\|T(f)-T(\psi)\|\wedge \|T(f)+T(\psi)\|\\
 &\geq 1-T(f)(\sigma(t))\vee-T(f)(\sigma(t)).
 \end{align*}
It follows that $f(t)\leq T(f)(\sigma(t))+\varepsilon$ or $f(t)\leq -T(f)(\sigma(t))+\varepsilon$. Since $\varepsilon$ is arbitrary, we see that actually
$f(t)\leq T(f)(\sigma(t))$ or $f(t)\leq -T(f)(\sigma(t))$ respectively.

Note that $T^{-1}$ is also a phase-isometry. Thus a similar argument as above shows that $T(f)(\sigma(t))\leq f(t)$ if $T(f)(\sigma(t))\geq f(t)\geq 0$ or $-T(f)(\sigma(t))\leq f(t)$ if $-T(f)(\sigma(t))\geq f(t)\geq 0$. This entails the first conclusion.

Now let us deal with the second conclusion. Set $h=T(\mathds{1}_K)$ and $\varphi=\sigma^{-1}$. We first show \eqref{equ:11} holds for those $f\in S_{C(K)^{+}}$. Since $|h(s)|=1$ for all $s\in\Omega$ by Lemma \ref{lemma:2}, we get \eqref{equ:11} in this case by combining the first conclusion and
\begin{align*}
 \{\|h-T(f)\|, \|h+T(f)\|\}&=\{\|T(\mathds{1}_K)-T(f)\|, \|T(\mathds{1}_K)+T(f)\|\}\\&=\{\|\mathds{1}_K-f\|,\|\mathds{1}_K+f\|\}=\{2,b\}
\end{align*}
where $0\leq b\leq 1$.  A completely analogous argument shows that \eqref{equ:11} holds for $f\in S_{C(K)^-}$. Proceeding in a similar fashion as the above, we see that \eqref{equ:11} holds for every $f\in S_{C(K)}$ and those $s\in\Omega$ with $|f(\varphi(s))|=1$. i.e., there is $\theta\in\{-1,1\}$ such that
\begin{equation*}
T(f)(s)=\theta h(s) f(\varphi(s))  \quad (s\in\Omega, |f(\varphi(s))|=1).
\end{equation*}
Indeed, for every $f\in S_{C(K)}$ with $f(t_1)=1$ and $f(t_2)=-1$ for distinct $t_1,t_2\in K$, consider an $f_1:K\rightarrow [0,1]$ satisfies $f_1(t)=1$ if $f(t)=1$ and  $f_1(t)=0$ if $f(t)=-1$.

 Since we have
\begin{align*}
 \{\|T(f_1)-T(f)\|, \|T(f_1)+T(f)\|\}=\{\|f_1-f\|,\|f_1+f\|\}=\{2,c\},
\end{align*}
where $1\leq c<2$, it follows that \eqref{equ:11} holds for those $s\in\Omega$ with $f(\varphi(s))=1$. Similarly, we conclude that it holds for those $s\in\Omega$ with $f(\varphi(s))=-1$. These combining with the equation
\begin{align*}
\{\|T(\mathds{1}_K)-T(f)\|, \|T(\mathds{1}_K)+T(f)\|\}=\{\|\mathds{1}_K-f\|,\|\mathds{1}_K+f\|\}=\{2\}
\end{align*}
gives the desired conclusion.

Suppose that \eqref{equ:11} does not hold for all $f\in S_{C(K)}$. Then we can find an $f_0\in S_{C(K)}$ which is not in $S_{C(K)^+}\cup S_{C(K)^-}$ and  $s_1, s_2\in \Omega$ such that
  \begin{equation*}
    T(f_0)(s_1)=h(s_1)f_0(\varphi(s_1))\neq0   \,\,  \mbox{and}  \,\,   T(f_0)(s_2)=-h(s_2)f_0(\varphi(s_2))\neq0.
  \end{equation*}
Put $a:=|f_0(\varphi(s_1))|\wedge |f_0(\varphi(s_2))|$. Let
\begin{equation*}
A:=\{t\in K: f_0(t)\geq-\frac{a}{3}\}\,\,\mbox{and}\,\, B:=\{t\in K: f_0(t)\leq-\frac{a}{2}\}.
\end{equation*}
 If $A$ and $B$ are not empty-sets,  Urysohn's lemma provides a continuous function $g:K\rightarrow [-1,1]$ such that if $t\in A, g(t)=1$ and if $t\in B, g(t)=-1$. If $A$ or $B$ is an empty-set, $g$ is chosen to be the extreme point $-\mathds{1}_K$ or $\mathds{1}_K$ respectively. Since $T$ is a phase-isometry, this together with the above establishes that
\begin{align*}
 1+\frac{a}{2}>\|f_0-g\|\wedge\|f_0+g\| &=\|T(f_0)-T(g)\|\wedge\|T(f_0)+T(g)\|\\&\geq1+|T(f_0)(s_1)|\wedge |T(f_0)(s_2)|=1+a.
\end{align*}
A contradiction thus proves the second conclusion. The proof is complete.
  \end{proof}

As a direct consequence of Theorem \ref{lemma:5}, we have the following corollary.
\begin{corollary}
Let $K$ and $\Omega$ be compact Hausdorff spaces, and let $T:S_{C(K)}\rightarrow S_{C(\Omega)}$ be a surjective phase-isometry. Then $T$ is phase-equivalent to a surjective linear isometry $\Phi$ from $S_{C(K)}$ onto $S_{C(\Omega)}$ of the form $\Phi(f)=f\circ\varphi$ for all $f\in S_{C(K)}$, where $\varphi$ is a homeomorphism from $\Omega$ onto $K$. Thus $\Phi$ is clearly the restriction of a linear isometry from $C(K)$ onto $C(\Omega)$.
\end{corollary}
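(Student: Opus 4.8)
The plan is to read the required representation straight off Theorem~\ref{lemma:5} and then fold the bookkeeping signs into a single phase function. By that theorem there are a homeomorphism $\varphi\colon\Omega\to K$ and a continuous unimodular $h:=T(\mathds{1}_K)$ such that, for every $f\in S_{C(K)}$, there is a sign $\theta_f\in\{-1,1\}$ with $T(f)(s)=\theta_f\,h(s)\,f(\varphi(s))$ for all $s\in\Omega$. By Lemma~\ref{lemma:2} the function $h$ is an extreme point of $B_{C(\Omega)}$, hence a continuous $\{-1,1\}$-valued function, and $\varphi$ is already the homeomorphism from $\Omega$ onto $K$ demanded by the statement.

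First I would produce the phase function. Setting $\varepsilon\colon S_{C(K)}\to\{-1,1\}$ by $\varepsilon(f):=\theta_f$ is legitimate, since the only ambiguity in $\theta_f$ occurs where $f\circ\varphi$ vanishes and there the choice is immaterial. Then $\varepsilon\cdot T$ is the map $f\mapsto h\cdot(f\circ\varphi)$, and the crux is to present this as an \emph{unweighted} composition rather than a properly weighted one. Here I would use that the continuous $\pm1$-valued factor $h$ is itself the restriction of a surjective linear isometry: the multiplier $M_h\colon g\mapsto h\cdot g$ on $C(\Omega)$ satisfies $M_h=M_h^{-1}$ and $\|M_h g\|=\|g\|$. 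Absorbing $M_h$ into the target normalizes the representation so that the weight disappears and one is left with $\Phi(f)=f\circ\varphi$; in this way $T$ is exhibited as phase-equivalent to the composition operator $\Phi$, with no surviving multiplier.

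It then remains to check that $\Phi$ is a surjective linear isometry. I would introduce $U\colon C(K)\to C(\Omega)$ by $Uf:=f\circ\varphi$; linearity is clear, and $\|Uf\|_\infty=\sup_{s\in\Omega}|f(\varphi(s))|=\sup_{t\in K}|f(t)|=\|f\|_\infty$ because $\varphi$ is a surjective homeomorphism, so $U$ is a linear isometry with inverse $g\mapsto g\circ\varphi^{-1}$. Hence $U$ maps $C(K)$ onto $C(\Omega)$ and carries $S_{C(K)}$ onto $S_{C(\Omega)}$, and taking $\Phi:=U|_{S_{C(K)}}$ finishes the argument, realizing $\Phi$ as the restriction of a linear isometry from $C(K)$ onto $C(\Omega)$.

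The hard part, and the only genuinely delicate point, is the weight-elimination step. One must be sure that discarding the continuous unimodular factor $h$ is compatible with the per-function signs $\theta_f$ and does not covertly force $\varepsilon$ to vary within a single fibre of $\varphi$. This is exactly where the continuity and extremality of $h=T(\mathds{1}_K)$ (Lemma~\ref{lemma:2}) are used: they guarantee that $M_h$ is an isometry and that the normalization to $h\equiv\mathds{1}_\Omega$ is harmless, after which the remaining sign bookkeeping that yields $\varepsilon\cdot T=\Phi$ with $\Phi(f)=f\circ\varphi$ is entirely routine.
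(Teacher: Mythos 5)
Your first and third steps are sound and agree with what the paper intends (the paper offers no argument beyond calling the corollary a direct consequence of Theorem~\ref{lemma:5}): choosing $\varepsilon(f):=\theta_f$, with the ambiguity where $f\circ\varphi$ vanishes handled as you say, turns $\varepsilon\cdot T$ into the weighted composition $f\mapsto h\cdot(f\circ\varphi)$, and $Uf:=f\circ\varphi$ is indeed the restriction of a surjective linear isometry. The genuine gap is the middle ``weight-elimination'' step, and it cannot be repaired. Phase-equivalence demands an identity $\varepsilon(f)\,T(f)=\Phi(f)$ with a \emph{single scalar sign} $\varepsilon(f)\in\{-1,1\}$ per function $f$, so ``absorbing $M_h$ into the target'' changes the map under discussion: from $\varepsilon\cdot T=M_h\circ\Phi$ you may conclude that $T$ is phase-equivalent to $M_h\circ\Phi$, i.e.\ to $f\mapsto h\cdot(f\circ\varphi)$, but not to $\Phi$ itself, because $M_h\circ\Phi$ and $\Phi$ are not phase-equivalent when $h$ is non-constant. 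Test $f=\mathds{1}_K$: phase-equivalence of $T$ with $\Phi(f)=f\circ\varphi$ forces $\varepsilon(\mathds{1}_K)\,h=\varepsilon(\mathds{1}_K)\,T(\mathds{1}_K)=\mathds{1}_\Omega$, i.e.\ $h$ constant. Concretely, let $K=\Omega=\{1,2\}$ with the discrete topology, $\varphi=\mathrm{id}$, and $T=M_h$ with $h=(1,-1)$; this $T$ is a surjective linear isometry, hence a surjective phase-isometry, yet for $f=\mathds{1}_K$ no single sign $\varepsilon(f)$ makes $\varepsilon(f)\,(f(1),-f(2))$ equal to $(f(1),f(2))$.

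What your counexampled step actually exposes is that the corollary as printed is only correct with the weight retained: the conclusion that honestly follows from Theorem~\ref{lemma:5} is that $T$ is phase-equivalent to $\Phi(f)=h\cdot(f\circ\varphi)$, which is still the restriction of a surjective linear isometry from $C(K)$ onto $C(\Omega)$ (the classical Banach--Stone form), and the paper's abstract states exactly this weighted representation $T(f)\in\{h\cdot f\circ\varphi,\,-h\cdot f\circ\varphi\}$. The unweighted form $\Phi(f)=f\circ\varphi$ is available precisely when $h=T(\mathds{1}_K)$ is constant --- for instance automatically when $\Omega$ is connected, since $h$ is continuous and unimodular by Lemma~\ref{lemma:2} --- in which case the constant sign folds into $\varepsilon$. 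Your own remark that weight-elimination was ``the only genuinely delicate point'' was accurate; the delicacy is fatal rather than routine, since no choice of the per-function signs $\theta_f$ can compensate for a multiplier that varies over $\Omega$. The fix is to state and prove the corollary with $\Phi(f)=h\cdot(f\circ\varphi)$, at which point your first and third steps already constitute a complete proof.
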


\section{Phase-isometries between $S_{C(K)}$ and a Banach space $Y$}
\label{sec:3}

In this section, we will study phase-isometries between the unit spheres of $C(K)$ and an arbitrary Banach space $Y$. Although the result of such study is already known for isometries (see \cite[Theorem 3.2]{F} with $K$ being a compact metric space and \cite[Corollary 6]{L} with $K$ being a compact Hausdorff space), the techniques we use for phase-isometries are quite different from that for isometries.

For a Banach space $Y$, let $\mathcal{F}_Y$ be the set consisting of the maximal convex subsets of $S_Y$, and let $\mathcal{F}_Y^+$ be a subset of $\mathcal{F}_Y$ such that for every $F\in \mathcal{F}_Y$, either $F$ or $-F$ is in $\mathcal{F}_Y^+$.

 The proof of the following result proceeds along the same lines as the proof of
Proposition \ref{prop:6}, but the details are more complicated.

\begin{proposition}\label{prop:1}
Let $Y$ be a Banach space, and let $K$ be a compact Hausdorff space. Suppose that $T:S_{C(K)}\rightarrow S_Y$ is a surjective phase-isometry. Then there is a bijection $\Phi: K\rightarrow \mathcal{F}_Y^+$ such that
for every $t\in K$, we have $T(F_t \cup -F_t)=\Phi(t)\cup -\Phi(t)$.
\end{proposition}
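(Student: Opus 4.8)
The plan is to follow the strategy of Proposition~\ref{prop:6} line by line, the one essential difference being that the target space $Y$ no longer carries point evaluations, so the compactness of $\Omega$ exploited there must be replaced by the weak$^*$-compactness of $B_{Y^*}$. I would first establish the forward containment. Fix $t\in K$ and finitely many $f_1,\dots,f_n\in F_t$, and put $A=\mbox{co}\{f_1,\dots,f_n\}$. Exactly as in Proposition~\ref{prop:6}, Lemma~\ref{lemma:9} supplies separable subspaces $X_0\subset C(K)$ and $Y_0\subset Y$ with $A\subset X_0$ and $T(S_{X_0})=S_{Y_0}$; Lemma~\ref{lemma:4} gives $x_0$ with $\mbox{st}(x_0)$ a maximal convex subset of $S_{X_0}$ containing $A$; and Lemmas~\ref{lemma:3} and~\ref{lemma:1} yield $T(\mbox{st}(x_0)\cup\mbox{st}(-x_0))=\mbox{st}(T(x_0))\cup-\mbox{st}(T(x_0))$ with $\mbox{st}(T(x_0))$ a maximal convex subset of $S_{Y_0}$. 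Since each $f_i\in\mbox{st}(x_0)$, we have $T(f_i)\in\mbox{st}(T(x_0))\cup-\mbox{st}(T(x_0))$, so $\eta_iT(f_i)\in\mbox{st}(T(x_0))=:C$ for suitable $\eta_i\in\{-1,1\}$. Regarding $C$ as a convex subset of $S_Y$, and recalling (as in the proof of Lemma~\ref{lemma:4}) that every convex subset of $S_Y$ lies in a maximal convex subset and that every maximal convex subset of $S_Y$ has the form $\{y\in S_Y:y^*(y)=1\}$ for some $y^*\in S_{Y^*}$, I obtain $y^*\in S_{Y^*}$ with $y^*|_C\equiv1$, whence $|y^*(T(f_i))|=1$ for every $i$.

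Now for $f\in S_{C(K)}$ set $W_f:=\{y^*\in B_{Y^*}:|y^*(T(f))|=1\}$, a weak$^*$-closed subset of the weak$^*$-compact ball $B_{Y^*}$. The previous paragraph shows that $\{W_f:f\in F_t\}$ has the finite intersection property, so there is $y_t^*\in\bigcap_{f\in F_t}W_f$, necessarily of norm one. Writing $G_t:=\{y\in S_Y:y_t^*(y)=1\}$, a convex subset of $S_Y$, I get $T(F_t)\subset G_t\cup-G_t$, and, enlarging $G_t$ to a maximal convex subset $G$ of $S_Y$, $T(F_t\cup-F_t)\subset G\cup-G$ by Lemma~\ref{lemma:1}.

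For the reverse inclusion I run the same machinery for $T^{-1}$, which is again a surjective phase-isometry by Lemma~\ref{lemma:1}. The crucial point is that now the \emph{target} is $C(K)$: given finitely many $y_1,\dots,y_n\in G$, the convex set furnished by Lemmas~\ref{lemma:9},~\ref{lemma:4},~\ref{lemma:3} and~\ref{lemma:1} lies in a separable subspace of $C(K)$, and averaging $\tfrac1n\sum\eta_iT^{-1}(y_i)$ inside it produces, exactly as in Proposition~\ref{prop:6}, a single $s\in K$ with $|T^{-1}(y_i)(s)|=1$ for all $i$. By compactness of $K$ the closed sets $\{s\in K:|T^{-1}(y)(s)|=1\}$, $y\in G$, have a common point, so $T^{-1}(G)\subset F_s\cup-F_s$ for some $s\in K$, that is $G\cup-G\subset T(F_s\cup-F_s)$.

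Finally I combine the two directions. Applying the backward step to the maximal $G\supset G_t$ just obtained gives $F_t\cup-F_t\subset T^{-1}(G\cup-G)\subset F_s\cup-F_s$; since $F_t$ is a maximal convex subset and $F_s\cap-F_s=\emptyset$, this forces $F_t=F_s$, i.e. $s=t$, whence $G\cup-G\subset T(F_t\cup-F_t)$ and therefore $T(F_t\cup-F_t)=G\cup-G$. Defining $\Phi(t)$ to be whichever of $G,-G$ lies in $\mathcal{F}_Y^+$ gives $T(F_t\cup-F_t)=\Phi(t)\cup-\Phi(t)$; this value is independent of the choices because a convex subset of $S_Y$ contained in $G\cup-G$ must lie entirely in $G$ or in $-G$ (the affine functional $y_t^*$ is constant on it). Injectivity of $\Phi$ follows from injectivity of $T$ together with $F_t=F_{t'}\Leftrightarrow t=t'$, and surjectivity onto $\mathcal{F}_Y^+$ follows by applying the backward step to an arbitrary $G\in\mathcal{F}_Y^+$. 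The main obstacle, and the reason the argument is longer than that of Proposition~\ref{prop:6}, is exactly this asymmetry: the forward direction into the featureless space $Y$ only yields containment in a pair $G\cup-G$ through weak$^*$-compactness, with no maximality or surjectivity, and these must be recovered afterwards from the backward direction into $C(K)$, where the point-evaluation structure restores the averaging argument of Proposition~\ref{prop:6}.
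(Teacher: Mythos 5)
Your argument is correct, and for half of it you take a genuinely different route from the paper. Both proofs share the ``backward'' step: given a maximal convex $F\subset S_Y$, pull it back through $T^{-1}$, use Lemmas~\ref{lemma:9}, \ref{lemma:4}, \ref{lemma:3} and the compactness of $K$ via the finite intersection property of the sets $\{s\in K:|T^{-1}(y)(s)|=1\}$ to obtain $F\cup-F\subset T(F_s\cup-F_s)$. Where you diverge is in upgrading this to equality. The paper does it by a direct contradiction inside $Y$: if $z_2\in T(F_s\cup-F_s)\setminus(F\cup-F)$, maximality of $F$ produces $y_1\in F$ and $y_2\in-F$ with $\|z_2+y_1\|<2$ and $\|z_2+y_2\|<2$; setting $u_1=(y_1-y_2)/2\in F$ and taking preimages $f_1,f_2\in F_s\cup-F_s$, the phase-isometry identity forces $\|u_1+z_2\|\vee\|u_1-z_2\|=\|f_1+f_2\|\vee\|f_1-f_2\|=2$, a contradiction --- no duality is used. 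You instead prove the opposite inclusion $T(F_t\cup-F_t)\subset G\cup-G$ from scratch by dualizing: the finite intersection property is transplanted to the weak$^*$-compact ball $B_{Y^*}$ (Banach--Alaoglu standing in for the compactness of $\Omega$ in Proposition~\ref{prop:6}), using the standard representation of maximal convex subsets of $S_Y$ by norm-one functionals, which the paper itself invokes in Lemma~\ref{lemma:4} and Lemma~\ref{lemma:8}; equality then drops out by chasing the two inclusions through the injectivity of $T$. Your route is longer and imports duality earlier than the paper needs it, but it buys something real: it shows explicitly that \emph{every} $t\in K$ gives a set $T(F_t\cup-F_t)$ of the form $G\cup-G$, i.e.\ that $\Phi$ is defined and surjective on all of $K$ --- a point the paper dispatches with the rather terse remark that ``since $T$ is injective, every $F_t$ must have the form \eqref{equ:7}'', which your forward step justifies in full. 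The only places you should add a line are the claim that $F_t\cup-F_t\subset F_s\cup-F_s$ forces $F_t=F_s$ (a midpoint evaluated at $s$ rules out $F_t$ meeting both $F_s$ and $-F_s$, and $F_t=-F_s$ is excluded by the constant function $1$), but these are routine.
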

\begin{proof}
The desired conclusion is equivalent to requiring that for every maximal convex subset $F\subset S_Y$, there exists an $s\in K$ such
that
\begin{equation}\label{equ:7}
T(F_s \cup -F_s)=F\cup-F.
\end{equation}
Indeed, if \eqref{equ:7} holds, then this $s$ is unique. Since $T$ is injective, every $F_t$ must has the form \eqref{equ:7} as dersired. Thus the required bijection $\Phi:K\rightarrow \mathcal{F}^+$ is simply defined by $\Phi(t)=F$ if $F\in \mathcal{F}^+$ or $\Phi(t)=-F$ if $F\in \mathcal{F}^+$ such that \eqref{equ:7} (with $s$ instead of $t$) holds.

For \eqref{equ:7}, we will consider $T$'s converse $T^{-1}$. Write $V:=T^{-1}$. Then $V$ is phase-isometry from $S_Y$ onto $S_{C(K)}$, and given $x\in F$, set
\begin{equation*}
K_x:=\{s\in K: |V(x)(s)|=1\}.
\end{equation*}
We will show that the closed sets $\{K_x: x\in F\}$ have the finite intersection property. To this end,
 for any $x_1,x_2,\cdots,x_n\in F$, set $A=\mbox{co}\{x_1,x_2,\cdots,x_n\}$. A similar argument as in Proposition \ref{prop:6} gives us an $x_0\in S_Y$ such that
$$\{V(x_1), V(x_2),\cdots, V(x_n)\}\subset \mbox{st}\big(V(x_0)\big)\cup -\mbox{st}\big(V(x_0)\big),$$
and $\mbox{st}\big(V(x_0)\big)$ and $-\mbox{st}\big(V(x_0)\big)$ are maximal convex subsets of $S_{Y_0}$ with $Y_0$ being some separable subspace of $C(K)$. It follows that there is a $t_0\in\Omega$ such that $|V(x_i)(t_0)|=1$ for all $i=1,2,\cdots,n$. Hence the closed sets $\{K_x:x\in F\}$ have the finite intersection property as desired.  Moreover since $K$ is compact, there is at least one $s\in K$ such that $|V(x)(s)|=1$ for all $x\in F$.  This therefore proves that $V(F)\subset F_s\cup -F_s$. By Lemma \ref{lemma:1}, we have $V(-F)\subset F_s\cup -F_s$ as well. Namely, we have
 \begin{equation}\label{equ:9}
   F\cup -F\subset T(F_s\cup -F_s).
 \end{equation}
 For the converse, assume that this is not true, i.e., $F\cup -F\varsubsetneq T(F_s\cup -F_s)$. We can choose $z_2\in T(F_s\cup -F_s)$ with $z_2\notin F\cup -F$. This implies that there are $y_1\in F$ and $y_2\in -F$ such that
 \begin{equation*}
   \|z_2+y_1\|<2\quad \mbox{and} \quad \|z_2+y_2\|<2.
 \end{equation*}
 Set $u_1:=(y_1-y_2)/2\in F$. \eqref{equ:9} ensures us the existence of $f_1, f_2 \in F_s\cup -F_s$ such that
 \begin{equation*}
   T(f_1)=u_1\quad\mbox{and} \quad T(f_2)=z_2.
 \end{equation*}
 Then
 \begin{align*}
  2&=\|f_1+f_2\|\vee \|f_1-f_2\|\\&=\|T(f_1)-T(f_2)\| \vee \|T(f_1)+T(f_2)\|=\|u_1-z_2\|\vee\|u_1+z_2\|<2.
 \end{align*}
A contradiction thus finishes the proof.
\end{proof}
We begin to describe the class of functionals $x^*$ in the unit sphere such that $\{y\in B_Y: x^*(y)=1\}=\Phi(t)$ and
we want to illustrate what happens to those vectors in $S_Y$ under the image of these $x^*$.
\begin{lemma}\label{lemma:8}
Let $Y$ be a Banach space, and let $K$ be a compact Hausdorff space. Suppose that $T:S_{C(K)}\rightarrow S_Y$ is a surjective phase-isometry and $\Phi$ is a bijection as in Proposition \ref{prop:1}. Then for every $t\in K$, there is a functional $x^*\in S_{Y^*}$ such that $x^*(x)=1$ for all $x\in\Phi(t)$ and $x^*(T(f))\in\{f(t),-f(t)\}$ for all $f\in S_{C(K)}$.
\end{lemma}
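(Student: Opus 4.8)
The plan is to manufacture, for each fixed $t\in K$, a single functional $x^*\in S_{Y^*}$ that simultaneously supports the face $\Phi(t)$ and recovers $|f(t)|$ from $T(f)$. The starting remark is that, since $T(-f)=-T(f)$ by Lemma~\ref{lemma:1}, the desired membership $x^*(T(f))\in\{f(t),-f(t)\}$ is equivalent to the scalar identity $|x^*(T(f))|=|f(t)|$. Thus the statement reduces to proving this identity together with $x^*|_{\Phi(t)}=1$. Conceptually this is the first conclusion of Theorem~\ref{lemma:5}, with the point-evaluation $\delta_{\sigma(t)}$ on $C(\Omega)$ replaced by an abstract supporting functional of $\Phi(t)$; the novelty is that no such evaluation is available on the arbitrary target $Y$.

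First I would fix $x^*$. Because $\Phi(t)$ is a maximal convex subset of $S_Y$, it is supported by a norm-one functional exactly as used in Lemma~\ref{lemma:4}: one takes supporting functionals at finite averages of points of $\Phi(t)$, which are automatically equal to $1$ on each averaged point, and then extracts a common functional from the finite intersection property together with the weak$^*$ compactness of $B_{Y^*}$. This produces $x^*\in S_{Y^*}$ with $x^*(x)=1$ for all $x\in\Phi(t)$, and maximality gives $\Phi(t)=\{y\in S_Y:x^*(y)=1\}$. The crucial by-product, recorded for later use, is that $|x^*(T(g))|=1$ for every $g\in F_t$, since $T(F_t)\subseteq\Phi(t)\cup-\Phi(t)$ by Proposition~\ref{prop:1}.

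The lower bound $|x^*(T(f))|\geq|f(t)|$ follows the opening of Theorem~\ref{lemma:5} verbatim. Replacing $f$ by $-f$ if needed I may assume $f(t)\geq0$, and I borrow the function $\psi\in F_t$ built there, which satisfies $\|f-\psi\|\leq 1-f(t)+\varepsilon$. Since $|x^*(T(\psi))|=1$, applying the phase-isometry equality to $f,\psi$ and taking minima yields
\begin{equation*}
1-|x^*(T(f))|\leq\min\{\|T(f)+T(\psi)\|,\|T(f)-T(\psi)\|\}=\min\{\|f+\psi\|,\|f-\psi\|\}\leq 1-f(t)+\varepsilon,
\end{equation*}
and letting $\varepsilon\to0$ gives $|x^*(T(f))|\geq f(t)=|f(t)|$.

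The reverse bound $|x^*(T(f))|\leq|f(t)|$ is the real obstacle, and the one place where the proof must diverge from Theorem~\ref{lemma:5}: there the converse was obtained by repeating the Urysohn construction on the target $C(\Omega)$, whereas here $Y$ carries no comparable functions, so elements of $S_Y$ cannot be approximated by members of $\Phi(t)$. My remedy is to keep all the freedom on the domain side. Using Urysohn's lemma in $K$, I choose $g\in F_t$ supported in a neighbourhood of $t$ so small that $|f|\leq|f(t)|+\varepsilon$ there; then $g(t)=1$, $0\leq g\leq1$, $g$ vanishes off that neighbourhood, and consequently
\begin{equation*}
\|f+g\|\vee\|f-g\|=\sup_{s\in K}\big(|f(s)|+|g(s)|\big)\leq 1+|f(t)|+\varepsilon.
\end{equation*}
On the other hand $|x^*(T(g))|=1$ forces $\|T(f)+T(g)\|\vee\|T(f)-T(g)\|\geq 1+|x^*(T(f))|$, and since these two maxima coincide by the phase-isometry relation we get $1+|x^*(T(f))|\leq1+|f(t)|+\varepsilon$. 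Letting $\varepsilon\to0$ yields the bound, and combined with the previous paragraph it gives $|x^*(T(f))|=|f(t)|$, hence $x^*(T(f))\in\{f(t),-f(t)\}$, as required.
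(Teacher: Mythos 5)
Your proof is correct and follows essentially the same route as the paper's: the same supporting functional $x^*$ for $\Phi(t)$, the same test function $\psi\in F_t$ close to $f$ for the lower bound, and the same Urysohn bump at $t$ for the upper bound, all combined via the phase-isometry relation. The only real difference is cosmetic but pleasant: by reformulating the goal as $|x^*(T(f))|=|f(t)|$ and using that $|x^*(T(g))|=1$ forces $\max\{\|T(f)+T(g)\|,\|T(f)-T(g)\|\}\geq 1+|x^*(T(f))|$, you avoid the paper's auxiliary sign-corrected map $\overline{T_t}$ and its case analysis on the sign of $x^*(T(f))$.
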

\begin{proof}
Given $t\in K$, Proposition \ref{prop:1} establishes that $T(F_t\cup -F_t)=\Phi(t)\cup -\Phi(t)$. Let $x^*\in S_{Y^*}$ satisfy
\begin{equation*}
  \Phi(t):=\{x\in B_Y: x^*(x)=1\}.
\end{equation*}
We will prove that $x^*$ has the required properties. To this end, we will construct another phase-isometry with better property instead of $T$.

Fix an $f_0\in F_t$. Considering $f_0$ instead of $-f_0$ if necessary, we may assume that $T(f_0)\in \Phi(t)$. By Proposition \ref{prop:1} we can define a map $T_t: F_t\rightarrow \Phi(t)$ by $T_{t}(g)=T(g)$ if $T(g)\in \Phi(t)$ and $T_{t}(g)=-T(g)$ if $T(g)\in -\Phi(t)$.
By this we continue to define another map $\overline{T_t}:S_{C(K)}\rightarrow S_Y$ by
$$\overline{T_{t}}(g)= \left
\{ \begin{array}{ll}
T_t(g) & \mbox{if }\, g\in F_t; \\
-T_t(-g) & \mbox{if }\,g\in -F_t; \\
T(g)  & \mbox{if } \, g\notin -F_t\cup F_t.
\end{array}
\right.$$
It is obvious that $\overline{T_{t}}$ is phase-equivalent to $T$, and hence $\overline{T_{t}}$ is also a surjective phase-isometry such that $\overline{T_{t}}(F_t)=\Phi(t)$ and $\overline{T_{t}}(-F_t)=-\Phi(t)$ which follows from the definition and Proposition \ref{prop:1}.

Now for every $f\in S_{C(K)}$, if $|f(t)|=1$, the desired conclusion follows straight from Proposition \ref{prop:1}.  We only need to deal with the case of $|f(t)|<1$. Since $\overline{T_{t}}$ is odd by Lemma \ref{lemma:2}, it suffices to consider those $f$ satisfying $f(t)\geq0$. In other words, we may assume that $0\leq f(t)<1$. As shown in the front part of the proof of Theorem \ref{lemma:5}, for every $\varepsilon>0$, there is a $\psi,\phi\in F_t$ such that
\begin{equation*}
 \| f-\psi\|\leq1-f(t)+\varepsilon,
\end{equation*}
and $\|f+\phi\|\vee\|f-\phi\| <1+f(t)+\varepsilon$.
Since $\overline{T_{t}}$ is a phase-isometry, we see that
\begin{align*}
  1-f(t)+\varepsilon\geq \|f-\psi\|\wedge \|f+\psi\|&=\|\overline{T_{t}}(f)-\overline{T_{t}}(\psi)\|\wedge\|\overline{T_{t}}(f)+\overline{T_{t}}(\psi)\|\\
  &\geq\Big(1-x^*\big(T(f)\big)\Big)\wedge\Big(1+x^*\big(T(f)\big)\Big).
\end{align*}
This leads directly to $x^*(\overline{T_{t}}(f))\geq f(t)$ or $-x^*(\overline{T_{t}}(f))\geq f(t)$.

If $x^*(\overline{T_{t}}(f))\geq f(t)>0$, note that
\begin{align*}
  1+x^*(\overline{T_{t}}(f))&\leq \inf_{g\in -F_t}\|\overline{T_{t}}(f)-\overline{T_{t}}(g)\| \\
                            &\leq \inf_{g\in -F_t}(\|f-g\|\vee \|f+g\|)\\
                            &\leq\|f-\phi\|\vee \|f+\phi\|\leq1+f(t)+\varepsilon.
\end{align*}
Thus the arbitrariness of $\varepsilon$ implies that $x^*(\overline{T_{t}}(f))\leq f(t)$, and as a consequence, equality $x^*(\overline{T_{t}}(f))= f(t)$ is valid. If $-x^*(\overline{T_{t}}(f))\geq f(t)$, a similar discussion gives $-x^*(\overline{T_{t}}(f))=f(t)$. Since $\overline{T_{t}}(f)=T(f)$ by the definition, this finishes the proof.
\end{proof}
For a surjective phase-isometry $T: S_{C(K)}\rightarrow S_Y$, we let $G$ denote the subset of $S_{Y^*}$ that consists of those $x^*$ satisfying Lemma \ref{lemma:8}, that is
\begin{equation*}
  G:=\{x^*\in S_{Y^*}: x^*(T(f))\in\{f(t),-f(t)\} \, \mbox{for some}\, t\in K \, \mbox{and for all}\, f\in S_{C(K)}\}.
\end{equation*}
Note that $G$ is symmetric, i.e., $-G=G$. To simplify the notation, we set
\begin{equation*}
G^+:=\{x^*\in G: x^*(T(\mathds{1}_K))=1\},
\end{equation*}
where $\mathds{1}_K$ is the constant function with value 1 on $K$. Then
it is easily checked that for every $x^*\in G$, either $x^*$ itself or $-x^*$ is in $G^+$. 

One can see from the above lemma that for every $x^*\in G^+,$ the sign of $x^*(T(f))$ may depend on $f$ and some $t\in K$. However, we will prove that it only depends on $f$, and more, the sign of one $x^*\in G^+$ on some $T(f)$ decides all those of $z^*\in G^+$ on this $T(f)$.
\begin{lemma}\label{lemma:10}
Let $Y$ be a Banach space, and let $K$ be a compact Hausdorff space. Suppose that $T:S_{C(K)}\rightarrow S_Y$ is a surjective phase-isometry.
For every $f\in S_{C(K)}$ and every $x^*\in G^+$, if $x^*(T(f))=\theta f(t)$ for some $t\in K$ and some $\theta\in\{-1,1\}$, then $z^*(T(f))=\theta f(s)$ for all $s\in K$ and all $z^*\in G^+$.
\end{lemma}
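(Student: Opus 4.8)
The plan is to prove the stated uniformity by contradiction, using \eqref{eq} together with the fact that $T\mathds{1}_K$ is a universal anchor, since $x^*(T\mathds{1}_K)=1$ for every $x^*\in G^+$. For $x^*\in G^+$ write $t_{x^*}\in K$ for the (unique) point given by Lemma \ref{lemma:8}, so that $|x^*(T(g))|=|g(t_{x^*})|$ for all $g\in S_{C(K)}$; the assertion is exactly that the intrinsic sign $x^*(T(f))/f(t_{x^*})\in\{-1,1\}$ is the same for every $x^*\in G^+$ with $f(t_{x^*})\neq0$ (when $f(t_{x^*})=0$ the conclusion is automatic). So suppose there were $x^*,z^*\in G^+$ with $x^*(T(f))=f(t_1)\neq0$ and $z^*(T(f))=-f(t_2)\neq0$, where $t_1=t_{x^*}$ and $t_2=t_{z^*}$; set $a=|f(t_1)|\wedge|f(t_2)|>0$. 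I would first record two facts. (i) $G$ norms $Y$: as $T$ is onto, any $y\in S_Y$ equals $T(g)$, whence $\sup_{x^*\in G}|x^*(y)|=\sup_{x^*\in G}|g(t_{x^*})|=\|g\|=1$ because $x^*\mapsto t_{x^*}$ carries $G$ onto $K$ (Lemma \ref{lemma:8}); by homogeneity $\|v\|=\sup_{x^*\in G}|x^*(v)|$ for all $v\in Y$. (ii) On $S_{C(K)^+}$ the intrinsic sign is already uniform: since $\{\|T\mathds{1}_K-T(f)\|,\|T\mathds{1}_K+T(f)\|\}=\{2,\,1-\min f\}$ with $1-\min f\le1$, a mismatched pair of signs would force one of these two norms to be both $\le1$ and, upon evaluating $x^*$ or $z^*$, at least $1+|f(t_i)|>1$; the same holds on $S_{C(K)^-}$.

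The core step is a coherence property at the unimodular points of a function: if $g\in S_{C(K)}$ and $|g(t_{x^*})|=|g(t_{z^*})|=1$, then $x^*(T(g))/g(t_{x^*})=z^*(T(g))/g(t_{z^*})$. When $g(t_{x^*})=g(t_{z^*})$ I would compare $g$ with $g^+=\max(g,0)\in S_{C(K)^+}$, whose sign is uniform by (ii): a mismatch makes $x^*(T(g)\pm T(g^+))$ and $z^*(T(g)\mp T(g^+))$ equal to $\pm2$, forcing both $\|T(g)\pm T(g^+)\|=2$, against $\{\|g\pm g^+\|\}=\{2,-\min g\}$ with $-\min g<2$. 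When $g(t_{x^*})=-g(t_{z^*})$ and the signs disagreed I would use the anchor: after replacing $g$ by $-g$ we may take $x^*(T(g))=z^*(T(g))=1$, while $\|T\mathds{1}_K-T(g)\|=\|\mathds{1}_K-g\|=2$; fact (i) then produces $w^*\in G^+$ with $w^*(T(g))=-1$, and since $|g(t_{w^*})|=1$ the point $t_{w^*}$ carries the same $g$-value as $t_{x^*}$ or as $t_{z^*}$, so the same-value coherence just proved gives $1=-1$.

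Granting this, I would finish as in the last paragraph of Theorem \ref{lemma:5}. By Urysohn's lemma choose $g\in S_{C(K)}$ with $g=1$ on $\{f\ge-a/3\}$ and $g=-1$ on $\{f\le-a/2\}$; then $g(t_i)=\operatorname{sgn}f(t_i)$, $|g(t_i)|=1$, and $\|f-g\|\wedge\|f+g\|\le1+a/2$. Coherence supplies a single $\theta_g\in\{-1,1\}$ with $x^*(T(g))=\theta_g g(t_1)$ and $z^*(T(g))=\theta_g g(t_2)$, and then, whatever $\theta_g$ is, evaluating $x^*$ and $z^*$ gives $\|T(f)+T(g)\|\ge1+a$ and $\|T(f)-T(g)\|\ge1+a$ simultaneously. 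As \eqref{eq} yields $\{\|T(f)\pm T(g)\|\}=\{\|f\pm g\|\}$, the two minima coincide, forcing $1+a\le1+a/2$, the desired contradiction.

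The main obstacle is precisely the mixed-value case of the coherence step, that is, tying the phase on $\{f>0\}$ to the phase on $\{f<0\}$: because \eqref{eq} fixes only the unordered pair $\{\|Tx+Ty\|,\|Tx-Ty\|\}$, the allotment of $+$ and $-$ is free for each pair of arguments, so no purely local or continuity/connectedness argument is available. It is overcome by the norming property (i), which lets the anchor $T\mathds{1}_K$ detect a functional of the opposite sign; the one delicate point is realising that functional at an \emph{exact} unimodular point of $g$, for which I would keep $g$ equal to $\pm1$ on sets with nonempty interior and combine compactness of $K$ with Lemma \ref{lemma:8}.
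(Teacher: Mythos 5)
Your proof is correct and follows essentially the same route as the paper's: first uniformity of the sign on $S_{C(K)^{+}}\cup S_{C(K)^{-}}$ by comparison with $\mathds{1}_K$, then coherence at the unimodular points of $f$, and finally the Urysohn-function contradiction pitting $1+a/2$ against $1+a$. The only divergence is in the unimodular-point step, which the paper leaves as ``similar in spirit to Theorem \ref{lemma:5}'' and which you implement explicitly and validly via $\max(g,0)$ together with the norming property of $G$ (itself correctly justified from surjectivity of $T$, compactness of $K$ and Lemma \ref{lemma:8}).
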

\begin{proof}
We first show that the conclusion holds for all $f\in S_{C(K)}^+\cup S_{C(K)}^-$.  Given $f\in S_{C(K)}^+$, if $\|T(f)-T(\mathds{1}_K)\|=\|f-\mathds{1}_K\|\leq1$, then $$x^*(T(\mathds{1}_K)-T(f))\leq \|T(f)-T(\mathds{1}_K)\|\leq1$$ for all $x^*\in G^+$. It follows that $x^*(T(f))\geq 0$ for all $x^*\in G^+$. Similarly we have  $x^*(T(f))\leq 0$ for all $x^*\in G^+$ if $\|T(f)+T(\mathds{1}_K)\|=\|f-\mathds{1}_K\|\leq1.$ These combined with Lemma \ref{lemma:8} proves the desired conclusion in the case of $f\in S_{C(K)^+}$. Another case of $f\in S_{C(K)^-}$ follows from an analogous argument.

Following a proof which is similar in
spirit to that of Theorem \ref{lemma:5}, we conclude that desired conclusion holds for every $f\in S_{C(K)}$ and those $t\in K$ with $|f(t)|=1$.
Suppose that the conclusion are not true for all $f\in S_{C(K)}$. This means that we can find an $f_0\in S_{C(K)}\setminus (S_{C(K)}^+\cup S_{C(K)}^-)$ and $x^*_1,x^*_2\in G^+$ such that
\begin{equation*}
x^*_1(T(f_0))=f_0(t_1)\neq 0 \quad \mbox{and} \quad x^*_2(T(f_0))=-f_0(t_2)\neq 0.
\end{equation*}
 We will apply the method in Theorem \ref{lemma:5} to prove that there is only one sign $\theta$ for a fixed $f\in S_{C(K)}.$
Write $a=|f_0(t_1)|\wedge|f_0(t_2)|$. Observe that $f_0\notin S_{C(K)}^+\cup S_{C(K)}^-$. Hence the set $A:=\{t\in K: f_0(t)\geq -\frac{a}{3}\}$ cannot be empty. If the set $B=\{t\in K: f_0(t)\leq -\frac{a}{2}\}$ is also not empty, by Urysohn's lemma there is a $g:K\rightarrow [-1,1]$ such that $g(t)=1$ for $t\in A$, and $g(t)=-1$ if $t\in B$.  If $B=\emptyset$, take $g$ to be the constant function $\mathds{1}_K$. Put $x:=T(f_0)-T(g)$ and $y:=T(f_0)+T(g)$. The previous argument implies that
\begin{align*}
1+\frac{a}{2}\geq \|f_0-g\|&\geq\|x\|\wedge \|y\| \\
                   &\geq (|x^*_1(x)|\vee|x^*_2(x)|)\wedge (|x^*_2(y)|\vee |x^*_2(y)|)\\
                   &\geq 1+|f_0(t_1)|\wedge|f_0(t_2)|=1+a.
           \end{align*}
A contradiction therefore finishes the proof.
\end{proof}

We are now ready for our second main result. The previous lemma will play a key role in the proof.
\begin{theorem}\label{thm2}
Let $Y$ be a Banach space, and let $K$ be a compact Hausdorff space. Suppose that $T:S_{C(K)}\rightarrow S_Y$ is a surjective phase-isometry. Then $T$ is phase-equivalent to an isometry which can be extended to a linear isometry from $C(K)$ onto $Y$.
\end{theorem}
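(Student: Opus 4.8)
The plan is to use the phase function hidden in Lemma \ref{lemma:10} to straighten $T$ into a genuine isometry whose coordinates are pinned down by the functionals of $G^+$, and then to extend this isometry radially to all of $C(K)$. The whole point is that the hard structural work has already been done in Proposition \ref{prop:1} and Lemmas \ref{lemma:8}, \ref{lemma:10}; what remains is to assemble it.

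First I would define the phase function. For each $t\in K$, fix (via Lemma \ref{lemma:8}) a functional $x^*_t\in G^+$ with $x^*_t(x)=1$ for all $x\in\Phi(t)$ and $x^*_t(T(f))\in\{f(t),-f(t)\}$ for all $f\in S_{C(K)}$. Given $f\in S_{C(K)}$, choose any $t$ with $f(t)\neq 0$; then $x^*_t(T(f))=\theta f(t)$ for a unique $\theta\in\{-1,1\}$, and Lemma \ref{lemma:10} guarantees this sign is independent of both $t$ and the chosen functional. Hence $\varepsilon(f):=\theta$ is well defined, and setting $S:=\varepsilon\cdot T$ we obtain $x^*_t(S(f))=\varepsilon(f)x^*_t(T(f))=\varepsilon(f)^2 f(t)=f(t)$ for every $t\in K$ and every $f\in S_{C(K)}$. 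This is the key identity; I expect verifying that $\varepsilon$ is genuinely well defined (i.e.\ that Lemma \ref{lemma:10} really delivers a single sign per $f$) to be the first delicate point.

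Next I would show $S$ is an isometry. Since $S$ is phase-equivalent to $T$, it is still a phase-isometry, so $\{\|S(f)-S(g)\|,\|S(f)+S(g)\|\}=\{\|f-g\|,\|f+g\|\}$. At the same time $x^*_t(S(f)-S(g))=(f-g)(t)$ and $x^*_t(S(f)+S(g))=(f+g)(t)$, and taking suprema over $t$ (using $\|x^*_t\|\le 1$) yields $\|S(f)-S(g)\|\ge\|f-g\|$ and $\|S(f)+S(g)\|\ge\|f+g\|$. A short case analysis on the set equality then forces $\|S(f)-S(g)\|=\|f-g\|$ in every case, so $S$ is an isometry. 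I would then check that $S$ is odd: from $T(-f)=-T(f)$ (Lemma \ref{lemma:1}) and the defining relation $x^*_t(T(f))=\varepsilon(f)f(t)$ one gets $\varepsilon(-f)=\varepsilon(f)$, whence $S(-f)=-S(f)$; combined with the surjectivity of $T$ this makes $S$ surjective onto $S_Y$.

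The crux is then to promote the family $\{x^*_t:t\in K\}$ from separating to \emph{norming}, which is exactly what surjectivity buys: for any $y\in S_Y$ pick $f$ with $S(f)=y$ and a point $t_0$ with $|f(t_0)|=1$, so that $|x^*_{t_0}(y)|=|f(t_0)|=1=\|y\|$; by homogeneity $\|y\|=\sup_{t\in K}|x^*_t(y)|$ for all $y\in Y$. This is the step I expect to be the main obstacle, since without surjectivity one could not guarantee the norm is attained at some $x^*_{t_0}$ for every point of the sphere. With norming in hand, I would define the radial extension $U(f):=\|f\|\,S(f/\|f\|)$ and $U(0):=0$; then $x^*_t(U(f))=f(t)$ for all $f\in C(K)$, linearity follows because both $U(\alpha f+\beta g)$ and $\alpha U(f)+\beta U(g)$ have the same image under the separating family $\{x^*_t\}$, the identity $\|U(f)\|=\sup_t|x^*_t(U(f))|=\|f\|$ gives isometry, and surjectivity of $S$ gives surjectivity of $U$. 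Thus $U$ is a linear isometry of $C(K)$ onto $Y$ extending $S=\varepsilon\cdot T$, which is the assertion.
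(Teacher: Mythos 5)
Your proposal is correct and follows essentially the same route as the paper: use Lemma \ref{lemma:10} to correct the sign of $T$, use the functionals of $G^+$ to compute norms coordinatewise, and extend positively homogeneously. The only (minor) differences are that you obtain linearity directly from the separating family $\{x^*_t\}$ where the paper invokes Mazur--Ulam, and that you explicitly justify the norming identity $\|y\|=\sup_{t\in K}|x^*_t(y)|$ via surjectivity, a point the paper merely asserts.
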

\begin{proof}
We first define the natural positive homogeneous extension of $T$ written as $\widetilde{T}: C(K)\rightarrow Y$ by
$$\widetilde{T}(f)= \left
\{ \begin{array}{ll}
\|f\|T(\frac{f}{\|f\|}) & \mbox{if }\, f\neq 0; \\
0 & \mbox{if }\,f=0.
\end{array}
\right.$$
We now take up to construct the desired isometry from $C(K)$ onto $Y$. Fix an $x^*_t\in G^+$ concerning some $t\in K$. Define $U:C(K)\rightarrow Y$ by
$$U(f)= \left
\{ \begin{array}{ll}
\widetilde{T}(f) & \mbox{if }\, x^*_t(\widetilde{T}(f))=f(t); \\
-\widetilde{T}(f)   & \mbox{if }\,x^*_t(\widetilde{T}(f))=-f(t)\neq 0;\\
0   &\mbox{if }\,f=0.
\end{array}
\right.$$
Then it is obvious that the restriction of $U$ in the unit sphere $S_{C(K)}$ is phase-equivalent to $T$ and by the definition, the identity
\begin{equation}\label{equ:10}
x_t^*(U(f))=f(t)
\end{equation}
holds for all $f\in C(K)$. Thus it follows from Lemma \ref{lemma:10} that $x^*(U(f))=f(s)$ for all $x^*\in G^+$ and all $s\in K$. Note first that for all $f,g\in C(K)$, there is an $x_0^*\in G^+$ such that $|x_0^*(U(f)-U(g))|= \|U(f)-U(g)\|$.
We deduce from this and \eqref{equ:10} that
\begin{align*}
 \|U(f)-U(g)\|&=\max\{|x^*\big(U(f)-U(g)\big)|: x^*\in G^+\}\\&=\max\{|f(t)-g(t)|: t\in K\}\\&=\|f-g\|.
\end{align*}
So $U$ is an isometry, and its definition shows that $U$ is surjective. Finally, the well-known Mazur-Ulam proves that $U$ is linear. The proof is complete.
\end{proof}
For a measure space $(\Omega, \Sigma, \mu)$, denote by
$L^\infty(\mu)$  the space of all
measurable essentially bounded functions $f$ with the essential
supremum norm
\begin{align*}
\|f\|=\mbox{ess. sup}_{t\in\Omega}|f(t)|.
\end{align*}
We have a similar result for $L^\infty(\mu)$ by means of Theorem \ref{thm2}.
\begin{corollary}
Let $Y$ be a Banach space, and let $(\Omega, \Sigma, \mu)$ be a measure space. Suppose that $T:S_{L^{\infty}(\mu)}\rightarrow S_Y$ is a surjective phase-isometry. Then $T$ is phase-equivalent to an isometry which can be extended to a linear isometry from $L^{\infty}(\mu)$ onto $Y$.
\end{corollary}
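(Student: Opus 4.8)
The plan is to reduce the statement to Theorem \ref{thm2} by representing $L^\infty(\mu)$ as a space of continuous functions. With real scalars, $L^\infty(\mu)$ is an abstract $M$-space with unit $\mathds{1}_\Omega$, so Kakutani's representation theorem (equivalently, the real Gelfand--Naimark theorem applied to the commutative unital $C^*$-algebra $L^\infty(\mu)$) provides a compact Hausdorff space $K$ together with a surjective linear isometry $J:L^\infty(\mu)\rightarrow C(K)$. In particular $J$ restricts to a bijection of the unit spheres $S_{L^\infty(\mu)}$ and $S_{C(K)}$, and since $J$ is linear and isometric it satisfies $\|Jx\pm Jy\|=\|x\pm y\|$ for all $x,y$.

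First I would transport $T$ across $J$. Consider the composition $T_0:=T\circ\big(J^{-1}|_{S_{C(K)}}\big):S_{C(K)}\rightarrow S_Y$, which is surjective as a composition of the sphere bijection $J^{-1}$ with the surjection $T$. For $u,v\in S_{C(K)}$, writing $x=J^{-1}u$ and $y=J^{-1}v$, the phase-isometry property of $T$ together with the fact that $J^{-1}$ is a linear isometry gives
\begin{align*}
\{\|T_0(u)+T_0(v)\|,\,\|T_0(u)-T_0(v)\|\}&=\{\|x+y\|,\,\|x-y\|\}\\
&=\{\|u+v\|,\,\|u-v\|\},
\end{align*}
so that $T_0$ is itself a surjective phase-isometry from $S_{C(K)}$ onto $S_Y$.

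Next I would apply Theorem \ref{thm2} to $T_0$. This yields a phase function $\varepsilon:S_{C(K)}\rightarrow\{-1,1\}$ and a surjective linear isometry $\widetilde{W}:C(K)\rightarrow Y$ whose restriction $W:=\widetilde{W}|_{S_{C(K)}}$ satisfies $\varepsilon\cdot T_0=W$. I would then pull everything back through $J$: define $\delta:S_{L^\infty(\mu)}\rightarrow\{-1,1\}$ by $\delta(x):=\varepsilon(Jx)$ and set $U:=\widetilde{W}\circ J:L^\infty(\mu)\rightarrow Y$. Being a composition of surjective linear isometries, $U$ is a surjective linear isometry. For every $x\in S_{L^\infty(\mu)}$ one has $T(x)=T_0(Jx)=\varepsilon(Jx)W(Jx)=\delta(x)U(x)$, that is $\delta\cdot T=U|_{S_{L^\infty(\mu)}}$. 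Hence $T$ is phase-equivalent to the restriction of the linear isometry $U$ from $L^\infty(\mu)$ onto $Y$, which is exactly the assertion.

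The argument is essentially a change of variables, so I do not anticipate a serious obstacle; the only point requiring genuine care is the scalar field. Since the whole paper works with real normed spaces, I must ensure that $J$ is a \emph{real}-linear isometry onto an honest $C(K)$ with $K$ compact Hausdorff, for which Kakutani's theorem for AM-spaces with unit is the cleanest citation. Everything else --- the surjectivity of $T_0$ and of $U$, the phase-isometry identity for $T_0$, and the transfer of the phase function back along $J$ --- is routine bookkeeping.
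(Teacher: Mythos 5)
Your proposal is correct and follows exactly the paper's route: the paper likewise identifies $L^\infty(\mu)$ with $C(K)$ via the Gelfand--Naimark/Kakutani representation of a unital commutative $C^*$-algebra and then invokes Theorem \ref{thm2}. You have merely written out the routine transport of $T$ and the phase function across the isometric isomorphism, which the paper leaves implicit.
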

\begin{proof}
Since $L^\infty(\mu)$ is a unital commutative $C^*$-algebra, we can find a compact Hausdorff
space $K$ such that $L^\infty(\mu)\cong C(K)$. The desired conclusion follows directly from this and Theorem \ref{thm2}.
\end{proof}


%
%


\begin{thebibliography}{}
%
%



\bibitem{CS} J. Cabello-S\'{a}nchez, A reflection on Tingley's problem and some applications, {\it J. Math. Anal.
Appl.}, \textbf{476}(2), 319--336 (2019).

\bibitem{C} G. Chevalier, Wigner's theorem and its generalizations, 429--475. Springer/Pl\'{e}num publishers, New York (2007).

\bibitem{CP} M. Cueto-Avellaneda and A.M. Peralta, The Mazur-Ulam property for commutative von Neumann algebras, {\it Linear and Multilinear Algebra},  \textbf{68}(2), 337--362 (2020).

 \bibitem{CP2} M. Cueto-Avellaneda and A.M. Peralta, On the Mazur-Ulam property for the space of Hilbert-space-valued continuous functions, {\it J. Math. Anal. Appl.}, \textbf{479}(1), 875--902 (2019).

\bibitem{D} G. G. Ding, On isometric extension problem between two unit spheres, {\it Sci. China Ser. A}, \textbf{52}, 2069--2083 (2009).

\bibitem{F} X.N. Fang and J.H. Wang, On extension of isometries between the unit spheres of normed space $E$ and $C(\Omega)$, {\it Acta
Math. Sin.} (Engl. Ser.), \textbf{22}(6), 1819--1824 (2006).

\bibitem{FJ} F.J. Fern\'{a}ndez-Polo, E. Jord\'{a} and A.M. Peralta, Tingley's problem for $p$-Schatten von Neumann
classes, to appear in J. Spectr. Theory. arXiv:1803.00763

\bibitem{HT} X. Huang and D. Tan, Wigner's theroem in atomic $L_{p}$-space $(p>0)$,  {\it Publ. Math. Debrecen}, \textbf{92}(3-4), 411--418 (2018).

\bibitem{IT}  D. Ili\'{s}evi\v{c} and A. Turn\v{s}ek, On Wigner's theorem in strictly convex normed spaces, {\it Publ. Math.
Debrecen}, to appear.

\bibitem{IBT}  D. Ili\'{s}evi\v{c}, M.Omladi\v{c} and A. Turn\v{s}ek, From Mazur-Ulam to Wigner, arXiv:2005.02949

\bibitem{JH} H. Jin and X. Huang, Extension of phase-isometries between the unit spheres of atomic $L_p$-spaces for $p>0$, {\it Bull. Korean Math. Soc.},  \textbf{56}(6), 1377-1384 (2019).

\bibitem{K} V. Kadets and O. Zavarzina, Generalized-lush spaces revisited, {\it Ann. Funct. Anal.}, 11(2), 244--258 (2020).

\bibitem{L} R. Liu, On extension of isometries between unit spheres
of $\mathcal{L}^\infty$-type space and a Banach space $E$, {\it J. Math. Anal. Appl.}, \textbf{333}, 959--970 (2007).

\bibitem{MP} G. Maksa and Z. P\'{a}les, Wigner's theorem revisited, {\it Publ. Math. Debrecen}, \textbf{81}(1-2), 243--249 (2012).

\bibitem{MO18} M. Mori and N. Ozawa, Mankiewicz's theorem and the Mazur-Ulam property for $C^*$-algebras, {\it Studia Math.}, \textbf{250}, 265--281 (2020).

 \bibitem{P18} A.M. Peralta, A survey on Tingley's problem for operator algebras, \emph{Acta Sci. Math.} \emph{(Szeged)}, \textbf{84}, 81--123 (2018).

 \bibitem{P2} A.M. Peralta, On the extension of surjective isometries whose domain is the unit sphere of a space of compact operators, arXiv:2005.11987

\bibitem{TH} D. Tan and X. Huang, Phase-isometries on real normed spaces, {\it J. Math. Anal. Appl.}, \textbf{488}, https://doi.org/10.1016/j.jmaa.2020.124058 (2020).

\bibitem{TH1} D. Tan and X. Huang, The Wigner property for CL-spaces and finite-dimensional polyhedral Banach spaces, preprint.


\bibitem{T1} D. Tan, X. Huang and R. Liu, Generalized-lush spaces and the Mazur-Ulam property, {\it Studia Math.}, {\bf 219}, 139--153 (2013).

\bibitem{TX} D. Tan and X. Xiong, A note on Tingley's problem and Wigner's theorem in the unit sphere of $\mathcal{L}^\infty(\Gamma)$-type spaces, Quaestiones Mathematicae, doi.org/10.2989/16073606.2020.1783010

\bibitem{Ta} R. Tanaka, A further property of spherical isometries, {\it Bull. Aust. Math. Soc.}, \textbf{90}, 304--310 (2014).

\bibitem{Ti} D. Tingley, Isometries of the unit sphere, {\it Geom. Dedicata}, {\bf 22}, 371--378 (1987).

\bibitem{RD} R. Wang and D. Bugajewski, On normed spaces with the Wigner Property, {\it Ann. Funct. Anal.}, \textbf{11}, 523--539 (2020).
















\end{thebibliography}


\end{document}